\newtheorem{thm}{Theorem}[section]
\newtheorem{cor}[thm]{Corollary}
\newtheorem{lemma}[thm]{Lemma}
\newtheorem{prop}[thm]{Proposition}
\theoremstyle{definition}
\theoremstyle{remark}
\newtheorem{rem}[thm]{Remark}
\newtheorem{ex}[thm]{Example}
\newcommand{\Z}{\mathbb{Z}}
\newcommand{\N}{\mathbb{N}}
\newcommand{\PT}{\mathcal{PT}}
\newcommand{\vr}{{\mathcal{VR}}}
\newcommand{\avr}{{\mathcal{AVR}}}
\newcommand{\ga}{\Gamma(G,{\mathcal A})}
\begin{document}
\title{Conjugacy in normal subgroups of hyperbolic groups}

\author{Armando Martino}
\author{Ashot Minasyan}
\address{School of Mathematics,
University of Southampton, Highfield, Sout\-hamp\-ton, SO17 1BJ, United
Kingdom.}
\email{A.Martino@soton.ac.uk} \email{aminasyan@gmail.com}

\begin{abstract} Let $N$ be a finitely generated normal subgroup of a Gromov hyperbolic group $G$.
We establish criteria for $N$ to have solvable conjugacy problem and be conjugacy separable in
terms of the corresponding properties of $G/N$. We show that the hyperbolic group
from F. Haglund's and D. Wise's version of Rips's construction
is hereditarily conjugacy separable. We then use this construction to produce
first examples of finitely generated and finitely presented conjugacy separable groups that contain
non-(conjugacy separable) subgroups of finite index.
\end{abstract}

\keywords{Hereditary conjugacy separability,
normal subgroups of hyperbolic groups, Rips's construction.}

\subjclass[2000]{20F67, 20F10, 20E26}
\maketitle

\section{Introduction}

One of the most intuitive ways to study an infinite (discrete) group, $G$, is to look at its finite quotients.
However, in general, one loses information about $G$ if one restricts attention to these finite quotients, and
so arises the definition of {\em residual finiteness}, where $G$ is said to be residually finite if for every
$x \neq y \in G$ there is homomorphism $\psi:G \to Q$, where $Q$ is a finite group,
such that $\psi(x) \neq \psi(y)$ in $Q$.

To use this method more systematically, one introduces the \textit{profinite topology} $\PT(G)$ on $G$. This
is the topology whose basic open sets are cosets of finite index normal subgroups in $G$. It is not
difficult to check that the group operations are continuous with respect to this topology, thus $G$,
equipped with $\PT(G)$, is a topological group. Residual finiteness for $G$ then becomes the
statement that this topology is Hausdorff; in fact, it is equivalent to the statement that
singletons are closed.

A subset $A \subseteq G$ is said to be \textit{separable in} $G$ if $A$ is closed in $\PT(G)$.
Intuitively, this means that $A$ can be recognized by looking at finite quotients of $G$.
A group is called {\em subgroup separable} (or LERF) if every finitely generated subgroup is closed in
$\PT(G)$. Similarly, a group is called {\em conjugacy separable} if for every element $g \in G$,
its conjugacy class $g^G:= \{h^{-1}gh\,|\, h\in G\} \subseteq G$ is closed
in $\PT(G)$.
Equivalently, $G$ is conjugacy separable if and only if for any two non-conjugate
elements $x,y \in G$ there exists a homomorphism $\psi$ from $G$ to a finite
group $Q$ such that $\psi(x)$ is not conjugate to $\psi(y)$ in $Q$.

One can formalize the sense in which subsets closed in $\PT(G)$ are ``recognizable", and this was
done classically by A. Mal'cev \cite{Malcev}, who proved that a finitely presented residually finite
group has solvable word problem. Mal'cev \cite{Malcev} also proved that a finitely presented
conjugacy separable group has solvable conjugacy problem. (In fact, one can show that any
recursively enumerable subset of a finitely
presented group $G$, which is closed in $\PT(G)$, has decidable membership problem.)
Thus residual finiteness and conjugacy separability can be viewed as natural
``profinite analogues'' of the solvability of word and conjugacy problems respectively.

This analogy between decision problems and closure properties
can be seen as part of the motivation for this paper. Namely,  it is
well known that if a (finitely presented) group has solvable word problem, then so does any finite index
subgroup and any finite extension. Similarly, for a residually finite group, any finite index subgroup
and any finite extension is also residually finite. However, one cannot say the same about the
conjugacy problem: Collins and Miller \cite{Col-Mil} proved that there exists a finitely presented
group with solvable conjugacy problem, with a finite index subgroup having unsolvable conjugacy problem.
In the same paper \cite{Col-Mil}
(see also \cite{Gor-Kirk}), they also construct a finitely presented group with solvable conjugacy
problem and its finite extension with unsolvable conjugacy problem. The purpose of this paper is
to provide a profinite analogue of the first of these. Specifically, our main theorem is

\begin{thm}\label{thm:fp_ex} For every integer $m \ge 2$ there exists a finitely presented
conjugacy separable  group $T$ that contains a non-(conjugacy separable) subgroup $S$ of index $m$. Moreover,
$T$ can be chosen in such a way that
\begin{itemize}
	\item $T$ is a subgroup of some right angled Artin group;
	\item both $T$ and $S$ have solvable conjugacy problem.
\end{itemize}
\end{thm}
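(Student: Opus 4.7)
The plan is to combine the paper's preceding main result --- that the hyperbolic group $G$ produced by the Haglund--Wise version of Rips's construction is hereditarily conjugacy separable and embeds as a subgroup of some right-angled Artin group $A$ --- with the paper's criteria relating conjugacy separability and solvability of the conjugacy problem of a finitely generated normal subgroup $N \triangleleft G$ to the corresponding properties of the quotient $Q := G/N$.

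Concretely, I would apply the construction to a carefully chosen finitely presented input group $Q$, obtaining a short exact sequence $1 \to N \to G \to Q \to 1$ where $G$ is hyperbolic, hereditarily conjugacy separable, and embedded in a RAAG $A$. Pick a normal subgroup $F \triangleleft Q$ of order $m$, and set $S := N$ and $T := \pi^{-1}(F)$, where $\pi : G \to Q$. Then $[T:S] = |F| = m$ and $T \le G \le A$, so $T$ is a subgroup of a RAAG. Both $S$ and $T$ are normal subgroups of $G$, with $G/S = Q$ and $G/T \cong Q/F$. The paper's CS criterion applied to each of these extensions should yield that $S$ fails to be conjugacy separable (since $Q$ is to be chosen to fail the criterion) while $T$ is conjugacy separable (since $Q/F$ is to satisfy it). Solvability of the conjugacy problem for both groups will follow from the companion CP criterion, which holds here because both $Q$ and $Q/F$ are finitely presented and hence have solvable word problem. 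Finite presentability of $T$ reduces, via the finite extension $1 \to N \to T \to F \to 1$, to finite presentability of $N$, which requires arranging the Haglund--Wise construction to output a finitely presented kernel.

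The principal obstacle is the construction of the quotient $Q$: one needs a finitely presented group $Q$ with a normal subgroup $F$ of order $m$ such that $Q$ violates the paper's CS criterion while $Q/F$ satisfies it. The natural strategy is to realize $Q$ as a suitable central (or almost-central) extension of a well-behaved conjugacy separable finitely presented group by a finite group of order $m$, engineered so that the extension cocycle encodes precisely the obstruction detected by the criterion and disappears upon quotienting by $F$. Verifying that the criterion indeed fails for $Q$ yet holds for $Q/F$ --- which demands a concrete understanding of the form of the criterion and of how finite-index normal subgroups of $N$ interact with conjugation by elements of $G \setminus N$ --- will be the most delicate part of the argument, since the flexibility needed to destroy CS on one level while preserving it on the next must be threaded through a finitely presented group compatible with the input requirements of the Haglund--Wise construction.
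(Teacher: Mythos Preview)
Your outline correctly reproduces the paper's finitely \emph{generated} construction (Section~\ref{sec:main_constr}): choosing a short exact sequence $1\to F\to P\to Q\to 1$ with $|F|=m$, $Q$ residually finite and $P$ not, applying the Haglund--Wise Rips construction to $P$ to get $N\lhd G$ with $G/N\cong P$, and setting $S=N$, $T=\psi^{-1}(F)$, is exactly what the paper does there. Theorem~\ref{thm:sep<->conj_sep} gives the conjugacy separability dichotomy, Theorem~\ref{thm:dec_w_p<->conj_probl} gives solvable conjugacy problem for both, and the embedding in a RAAG follows from $G\in\avr$.

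The genuine gap is finite presentability. You write that ``finite presentability of $T$ reduces \dots\ to finite presentability of $N$, which requires arranging the Haglund--Wise construction to output a finitely presented kernel.'' This cannot be arranged. The kernel $N$ in Rips's construction is, by design, only finitely generated; its relations encode the word problem of $P$, and for the quotients $P$ one needs here (non-residually-finite extensions of $\mathrm{Sp}(2n,\Z)$ or $\mathrm{SL}(3,\Z)$), $N$ will not be finitely presented. The paper's actual proof of Theorem~\ref{thm:fp_ex} does \emph{not} use $N$ and $T$ directly. Instead, it passes to the \emph{fibre products} $T_N,\,T_H\le G\times G$ associated to $N$ and $H=\psi^{-1}(F)$, develops parallel conjugacy-separability criteria for fibre products (Propositions~\ref{prop:fibre_sep->conj_sep} and~\ref{prop:fibre_conj_sep->rf_quot}, which need $G/H\cong Q$ to be cyclic subgroup separable rather than merely residually finite), and obtains finite presentability of $T_H$ from the $1$--$2$--$3$ Theorem of Baumslag--Bridson--Miller--Short (Lemma~\ref{lem:1-2-3}), which requires $Q$ to be of type $F_3$. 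This fibre-product step, together with the stronger hypotheses on $Q$ and the separate Section~\ref{sec:fibre_cs} criteria, is the essential content you are missing.
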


So while we would like conjugacy separability to pass to finite index subgroups, these examples show that it does not. For this reason, in \cite{Chag-Zal} S. Chagas and P. Zalesskii defined
a group $G$ to be \textit{hereditarily conjugacy separable} if every finite index subgroup of $G$ is
conjugacy separable, and constructed the first (infinitely generated) example of a
conjugacy separable but not hereditarily conjugacy separable group. The concept of
hereditary conjugacy separability is essential in the current paper. We believe that this concept
is stronger (according to the theorem above) and more useful than simply conjugacy separability,
in view of many applications, discovered in \cite{Min-RAAG}.

Let us also mention that an example of a finitely generated (but not finitely presented)
non-(conjugacy separable) group $G$,
containing a conjugacy separable subgroup $H$ of index $2$, was constructed much earlier
by A. Goryaga in \cite{Gor}. As it can be easily seen from Goryaga's argument,
the conjugacy problem is unsolvable in this group $G$.

In order to briefly describe the strategy of our proof and the structure of the paper,
let us start with the following

\begin{thm}\label{thm:dec_w_p<->conj_probl} Let $H$ be a finitely generated torsion-free
normal subgroup of a hyperbolic group $G$.
Then $H$ has solvable conjugacy problem if and only if $G/H$ has solvable word problem.
\end{thm}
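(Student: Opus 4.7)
The strategy for both implications is to use Gromov's theorem that the conjugacy problem in the hyperbolic group $G$ is solvable, together with the following observation arising from the torsion-freeness and normality of $H$: for every nontrivial $h \in H$ the element $h$ has infinite order, so $C_G(h)$ is virtually cyclic, and $\langle h \rangle \le C_G(h) \cap H$ has finite index in $C_G(h)$; consequently the image $\overline{C_G(h)}$ of $C_G(h)$ in $G/H$ is finite. Moreover, when $h_1, h_2 \in H$ are $G$-conjugate via some $g$, the full set of $G$-conjugators is the coset $C_G(h_1) g$, so $h_1$ is $H$-conjugate to $h_2$ if and only if $\overline{g}$ lies in the finite set $\overline{C_G(h_1)} \subseteq G/H$.

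For the direction ``$G/H$ has solvable word problem implies $H$ has solvable conjugacy problem'', given $h_1, h_2 \in H$ I would first decide their $G$-conjugacy, and in the affirmative case effectively produce a conjugator $g$ and a finite description of $C_G(h_1)$ (centralizers are effectively computable in a hyperbolic group). Using the word problem in $G/H$ I would enumerate a finite transversal $T$ of $C_G(h_1) \cap H$ in $C_G(h_1)$, and then test whether $tg \in H$ for some $t \in T$, again via the word problem in $G/H$. This settles the $H$-conjugacy question.

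For the direction ``$H$ has solvable conjugacy problem implies $G/H$ has solvable word problem'', given $g \in G$ I want to decide $g \in H$. A semi-algorithm for ``yes'' enumerates $H$ and checks equality with $g$ in $G$ using the solvable word problem of $G$. A complementary semi-algorithm enumerates $h \in H$ and tests whether $h$ and $g^{-1} h g$ (both in $H$ by normality) are $H$-conjugate, using the hypothesized conjugacy problem in $H$; a negative answer for some $h$ certifies $g \notin H$, since $g \in H$ would itself witness the $H$-conjugacy.

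The main obstacle is to verify that this ``no'' semi-algorithm terminates whenever $g \notin H$, which reduces to the key claim: \emph{for every $g \in G \setminus H$ there exists $h \in H$ with $g^{-1} h g$ not $H$-conjugate to $h$.} Assuming the contrary makes conjugation by $g$ a class-preserving automorphism of $H$. Reducing to the non-trivial case where $G$ is non-elementary and $G/H$ is infinite (the remaining cases being immediate), $H$ contains a non-abelian free subgroup, so $C_G(H)$ is a finite normal subgroup of $G$ meeting $H$ trivially. Picking $h_1, h_2 \in H$ freely generating a non-abelian free subgroup, the witnesses $y_i \in H$ with $g^{-1} h_i g = y_i^{-1} h_i y_i$ can be aligned on $\langle h_1, h_2 \rangle$ up to the finite group $C_G(\langle h_1, h_2 \rangle)$, and varying the free pair absorbs the remaining finite ambiguity to force $g \in H$, completing the claim.
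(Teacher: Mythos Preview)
Your argument for the direction ``word problem in $G/H$ $\Rightarrow$ conjugacy problem in $H$'' is essentially the paper's Proposition~\ref{prop:memb->conj}: decide $G$-conjugacy, find a conjugator, and then test membership of finitely many translates in $H$ using the solvability of the word problem in $G/H$. That direction is fine.

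The reverse direction, however, has a genuine gap. Your key claim --- that for every $g\in G\setminus H$ there exists $h\in H$ with $g^{-1}hg$ not $H$-conjugate to $h$ --- is \emph{false} under the stated hypotheses. The theorem only assumes $H$ is torsion-free, not $G$; so $G$ may contain a nontrivial finite subgroup centralising $H$. Concretely, take any torsion-free hyperbolic group $G_0$ with a finitely generated non-elementary normal subgroup $H_0\lhd G_0$ and $G_0/H_0$ infinite (Rips's construction supplies such pairs). Set $G:=G_0\times\mathbb{Z}/2\mathbb{Z}$ and $H:=H_0\times\{0\}$. Then $G$ is hyperbolic, $H$ is a finitely generated torsion-free normal subgroup, and $G/H\cong (G_0/H_0)\times\mathbb{Z}/2\mathbb{Z}$ is infinite; yet $g:=(1,1)\notin H$ satisfies $g^{-1}hg=h$ for all $h\in H$, so your ``no'' semi-algorithm never terminates on input $g$. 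Your sketch ``varying the free pair absorbs the remaining finite ambiguity to force $g\in H$'' cannot succeed: at best, the pointwise-inner-implies-inner argument yields $g\in C_G(H)\cdot H$, and since $C_G(H)\cap H=Z(H)=\{1\}$ while $C_G(H)$ may be a nontrivial finite group, this does not give $g\in H$.

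The paper's proof of this direction (Proposition~\ref{prop:conj->memb}) avoids the problem entirely by a different mechanism. Rather than searching over all of $H$ for a witness, it \emph{fixes in advance} a single infinite-order element $x\in H$ and, crucially, \emph{hardcodes} a finite set of coset representatives $1=f_1,f_2,\dots,f_k$ for $C_H(x)$ in $C_G(x)$ as part of the algorithm's description (these exist since $\langle x\rangle$ has finite index in $C_G(x)$; they need not be computed uniformly). Given $g$, one checks whether $x^g$ is $H$-conjugate to $x$; if so, one finds $h\in H$ with $x^g=x^h$, so $gh^{-1}\in C_G(x)$, and then determines by enumeration the unique $i$ with $gh^{-1}f_i^{-1}\in H$; finally $g\in H$ iff $i=1$. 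In the counterexample above, the element $(1,1)$ appears among the $f_i$ with $i\neq 1$, and the algorithm correctly outputs $g\notin H$. The point is that the finite ambiguity coming from $C_G(x)/C_H(x)$ (which subsumes $C_G(H)$) is resolved by the fixed, non-uniform data $f_1,\dots,f_k$, not by any property of the conjugation action of $g$ on $H$.
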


The above statement was probably known to the experts before this work.
In one direction, it can be compared, for instance, with the result of
M. Bridson \cite{Bridson-conj_semihyp}, claiming that a normal subgroup $N$ of a bicombable group $G$ has solvable conjugacy problem, provided
the generalized word problem is solvable in $G/N$;
in the other direction it follows from \cite[III.$\Gamma$, Lemma 5.18]{B-H}. We give a proof of
Theorem \ref{thm:dec_w_p<->conj_probl} in Section \ref{sec:conj_prob}.
We include it here mainly to motivate Theorem \ref{thm:sep<->conj_sep} below, in which
the word and conjugacy problems are replaced by the corresponding properties
of the profinite topology.


\begin{thm}\label{thm:sep<->conj_sep} Suppose that $G$ is a torsion-free
hereditarily conjugacy separable hyperbolic group and $H \lhd G$ is a finitely generated normal subgroup.
Then $H$ is conjugacy separable if and only if $G/H$ is residually finite.
\end{thm}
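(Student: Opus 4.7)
The plan is to prove the two implications separately, both resting on the following observation: in the torsion-free hyperbolic group $G$, every nontrivial element $x$ has a cyclic centralizer $C_G(x)=\langle c\rangle$ with $c^m=x$ for some $m\geq 1$; consequently, whenever $x\in H$ the image $\overline{C_G(x)}=\langle\bar c\rangle$ in $G/H$ is \emph{finite} cyclic, since $\bar c^{\,m}=\bar x=1$.

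For $(\Leftarrow)$, assuming $G/H$ is residually finite, I would take $x,y\in H$ non-conjugate in $H$ and split on whether they are $G$-conjugate. If they are not $G$-conjugate, conjugacy separability of $G$ (from HCS) gives a finite quotient $G\twoheadrightarrow Q$ separating them, and the induced finite quotient of $H$ separates them as well. Otherwise $y=g_0^{-1}xg_0$ for some $g_0\in G$, with $\bar g_0\notin\overline{C_G(x)}$ in $G/H$. Since $\overline{C_G(x)}$ is finite and $G/H$ is residually finite, one can find a finite-index normal $M\lhd G/H$ with $\bar g_0\notin\overline{C_G(x)}\cdot M$; lifting to $\tilde N\lhd G$ of finite index containing $H$ gives $g_0\notin C_G(x)\tilde N$, so $x,y$ remain non-conjugate in $\tilde N$. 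Applying HCS to the finite-index subgroup $\tilde N$ yields a finite quotient of $\tilde N$ separating $x$ and $y$, whose restriction to $H$ is the required finite quotient of $H$.

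For $(\Rightarrow)$, assume $H$ is conjugacy separable. The case when $G$ is elementary (so $G\in\{1,\mathbb{Z}\}$) is trivial, so I assume $G$ is non-elementary. Then any nontrivial finitely generated normal subgroup $H$ is itself non-elementary (a normal cyclic subgroup of a non-elementary hyperbolic group would be trivial) and so contains an element $h$ primitive in $G$, giving $C_G(h)=\langle h\rangle\leq H$. For any $g\in G\setminus H$ this forces $g\notin C_G(h)H=H$, so $h$ and $g^{-1}hg\in H$ are non-conjugate in $H$. By $H$ conjugacy separable, fix $H_0\lhd H$ of finite index witnessing this, and replace it by $N_0:=\bigcap_{\gamma\in G}\gamma H_0\gamma^{-1}$—a finite intersection since $H$ is finitely generated—so that $N_0\lhd G$ is still of finite index in $H$, with $h$ and $g^{-1}hg$ non-conjugate in $H/N_0$. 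The action of $G$ on the finite group $H/N_0$ factors through a finite image with kernel $C\lhd G$ of finite index, and I set $\tilde N:=CH$, a finite-index normal subgroup of $G$ containing $H$. If $g=ch_1$ were in $\tilde N$, triviality of the $C$-action on $H/N_0$ would yield $g^{-1}hg\equiv h_1^{-1}hh_1\pmod{N_0}$, contradicting the chosen non-conjugacy. Hence $G/\tilde N$ is a finite quotient of $G/H$ in which $gH$ is non-trivial.

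The main technical obstacle is the subcase of $(\Leftarrow)$ where $x,y$ are $G$-conjugate but not $H$-conjugate: here the finiteness of $\overline{C_G(x)}$ in $G/H$ is crucial, for without torsion-freeness and the cyclicity of centralizers in hyperbolic groups one would have to show that an infinite cyclic subgroup of $G/H$ is closed in the profinite topology, a substantially stronger property than residual finiteness. The delicate input on the $(\Rightarrow)$ side is the existence of $G$-primitive elements inside $H$, which I would produce by standard ping-pong arguments inside a rank-two free subgroup of the non-elementary group $H$.
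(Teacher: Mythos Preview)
Your $(\Leftarrow)$ argument is essentially the paper's Proposition~4.1, streamlined by the observation that in a torsion-free hyperbolic group centralizers are genuinely cyclic, so that $\overline{C_G(x)}$ is finite in $G/H$; the paper instead works with coset representatives of $C_H(x)$ in $C_G(x)$, but the mechanism is the same.

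Your $(\Rightarrow)$ argument is correct but takes a genuinely different route. The paper proves this direction by invoking two external results: the Minasyan--Osin theorem that a non-elementary subgroup $H$ of a hyperbolic group with $E_G(H)=\{1\}$ has $\mathrm{Aut}_{p.i.}(H)=\mathrm{Inn}(H)$, and Grossman's criterion that then $\mathrm{Out}(H)$ is residually finite; one finishes by embedding $G/H\hookrightarrow\mathrm{Out}(H)$ via $C_G(H)=\{1\}$. You instead give a direct, self-contained construction of the finite quotient of $G/H$: fix a $G$-primitive $h\in H$, separate $h$ from $g^{-1}hg$ in a finite quotient $H/N_0$ with $N_0$ characteristic in $H$ (this is where you use that $H$ is finitely generated, so the $G$-orbit of $H_0$ is finite), and read off non-triviality of $gH$ in $G/CH$ from the $G$-action on $H/N_0$. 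In effect you are unrolling Grossman's argument in this concrete setting, with the choice of a primitive $h$ replacing the ``no pointwise inner automorphisms'' hypothesis. Your approach is more elementary and closer in spirit to the paper's later Proposition~7.5 for fibre products (which also begins by selecting $h_1\in H$ with $E_G(h_1)=\langle h_1\rangle$, citing Ol'shanski\u\i). The one point to tighten is the existence of such a primitive element: ``ping-pong inside a rank-two free subgroup of $H$'' is not quite enough on its own, since a product of elements of $H$ could a priori be a proper power in $G$; the clean statement you want is Ol'shanski\u\i's lemma that a non-elementary subgroup $H$ with $E_G(H)=\{1\}$ contains an element $h$ with $E_G(h)=\langle h\rangle$.
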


As one can see, in Theorem \ref{thm:sep<->conj_sep} we had to impose
an additional assumption demanding that $G$ be hereditarily conjugacy separable.
Presently, it is not known whether there exist non-(conjugacy separable) or non-(residually finite)
hyperbolic groups. But if, for instance, there were a hyperbolic group $G$ that is
not residually finite, then the trivial
subgroup $H:=\{1\}$ would be conjugacy separable but $G/H \cong G$ would not be residually finite.
Thus the statement of Theorem \ref{thm:sep<->conj_sep} would be false without that additional assumption.

In Section \ref{sec:Rips} we suggest a generic method for constructing a hyperbolic group $G$
and its finitely generated normal subgroup $H$ satisfying the assumptions of
Theorem \ref{thm:sep<->conj_sep}. This method is based on the modification
of Rips's construction, presented by F. Haglund and D. Wise in \cite{H-W_1}, and on the result
that right angled Artin groups are hereditarily conjugacy separable, which was established in \cite{Min-RAAG}.
We apply this hereditarily conjugacy separable version of Rips's construction
together with Theorem \ref{thm:sep<->conj_sep}  in Section \ref{sec:main_constr},
to produce examples of finitely generated conjugacy separable groups which contain
non-(conjugacy separable) finite index subgroups.

In Section \ref{sec:fibre_cs} we obtain similar criteria for conjugacy separability of
fibre products, associated to normal subgroups of hyperbolic groups.
And in Section \ref{sec:f_p_ex} we construct the examples
demonstrating our main Theorem~\ref{thm:fp_ex}.

We note that the idea to employ Rips's construction for obtaining exotic examples of groups is an entirely familiar one,
having been used in \cite{Wise-incoherent}, \cite{B-G} and \cite{Bridson-direct}, for example.

\medskip
{\bf Acknowledgements.} We would like to thank M. Belolipetsky, M. Bridson,
F. Haglund and D. Wise for enlightening discussions.

\section{Preliminaries}
Let $(\mathcal{X},d)$ be a geodesic metric space and let $\delta$ be a non-negative real number.
A geodesic triangle $\Delta$ in $\mathcal X$ is said to be $\delta$-\textit{slim},
if each of its sides is contained in the closed $\delta$-neighborhood of the union of the two other
sides. The space $\mathcal X$ is called \textit{Gromov hyperbolic} if there exists
$\delta \ge 0$ such that every geodesic triangle $\Delta$ in $\mathcal X$ is $\delta$-slim.
A subset $Q \subseteq \mathcal X$ is said to be
$\varepsilon$-{\it quasiconvex} (for some $\varepsilon \ge 0$),
if any geodesic connecting two elements from $Q$
belongs to a closed $\varepsilon$-neighborhood of $Q$ in $\mathcal X$.

Given a group $G$, generated by a finite symmetrized (i.e., ${\mathcal A}={\mathcal A}^{-1}$) set
of elements $\mathcal A \subset G$, the corresponding Cayley graph $\ga$ can be equipped with the natural simplicial metric, making it a proper geodesic metric space. The group $G$
is (\textit{Gromov}) \textit{hyperbolic} if its Cayley graph $\ga$ is a Gromov hyperbolic metric space.
A subset $Q \subseteq G$ is called \textit{quasiconvex} if there is $\varepsilon \ge 0$ such that
$Q$ is $\varepsilon$-quasiconvex, when regarded as a subset of $\ga$.

Hyperbolic groups became a major subject of study in Geometric Group Theory since they were
introduced by M. Gromov in \cite{Gromov}. Hyperbolicity of a given group $G$  does not depend on the choice of a
particular finite generating set $\mathcal A$ of $G$. In the case when $G$ is hyperbolic,
quasiconvexity  of some subset $Q \subseteq G$ is also independent of $\mathcal A$.
For this and other basic properties of hyperbolic groups the reader is
referred to \cite{Ghys-Harpe} and \cite{Mihalik}.

It is well known that hyperbolic groups have solvable word and conjugacy problems
(see, for example, \cite[III.$\Gamma$.2.8]{B-H}).

Assume, now, that $G$ is a hyperbolic group, $H \le G$ is a subgroup and $S \subseteq G$ is a subset.
The \textit{centralizer} $C_H(S)$,
of $S$ in $H$, is the subgroup $\{h \in H\,|\, hs=sh, \forall\,s \in S\} \le G$.
It is a basic fact that for a hyperbolic group, $G$, and
 each infinite order element $g \in G$,
the cyclic subgroup $\langle g \rangle$ has finite index in the centralizer $C_G(g)$ (see \cite[8.3.34]{Ghys-Harpe}).

The subgroup $H \le G$ is called \textit{elementary} if it contains a cyclic subgroup of finite index.
Every infinite order element $g \in G$
belongs to a unique \textit{maximal elementary subgroup}
$E_G(g) \le G$ (see, for instance, \cite[Lemma 1.16]{Olsh-G-sbgps}).
In particular, $C_G(g) \subseteq E_G(g)$.

As A.~Olshanskii showed in  \cite[Prop. 1]{Olsh-G-sbgps},
for every non-elementary subgroup $H \le G$
there is a unique \textit{maximal finite subgroup} $E_G(H)$ \textit{normalized by} $H$ in $G$.
More precisely, $E_G(H)=\bigcap_{g \in H^0} E_G(g)$, where $H^0$ denotes the subset of all infinite
order elements in $H$.  It is not difficult to see that for a non-elementary subgroup $H \le G$
one has $C_G(H) \le E_G(H)$.

Let $\Gamma$ be a finite simplicial graph, and
let $\mathcal V$ and $\mathcal E$ be the sets of vertices and edges of $\Gamma$ respectively.
The \textit{right angled Artin group} $G$, associated to $\Gamma$, is given by the presentation

$$G:=\langle \mathcal V \,\|\,uv=vu, \,
\mbox{whenever } u,v \in \mathcal{V} \mbox{ and }  (u,v) \in \mathcal E \rangle.
$$ 

Let $G$ be a group and $H \le G$. Then $H$ is said to be a \textit{virtual retract} of $G$ if there
is a finite index subgroup $K \le G$ such that $H \le K$ and $H$ is a \textit{retract} of $K$
(that is, there exists an endomorphism $\rho:K \to K$ satisfying $\rho(K)=H$ and $\rho \circ \rho=\rho$).

The following two classes of groups were introduced in \cite{Min-RAAG}.
The first class $\vr$ consists of all groups that are virtual retracts of right angled Artin groups.
A group $G$ belongs to the second class  $\avr$, by definition, if $G$ contains a finite index
subgroup $H \le G$ with $H \in \vr$.

\section{Conjugacy problem for subgroups}\label{sec:conj_prob}

\begin{prop}\label{prop:memb->conj}
Let $G$ be a hyperbolic group and $H$ a finitely generated torsion-free subgroup of $G$.
If $H$ has solvable membership problem in $G$, then it also has solvable conjugacy problem.
\end{prop}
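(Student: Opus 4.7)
The plan is to decide conjugacy in $H$ of two given elements $u,v \in H$ by reducing to the conjugacy problem in $G$ together with the membership problem for $H$ in $G$. First, using the solvable conjugacy problem in the hyperbolic group $G$, decide whether $u$ and $v$ are conjugate in $G$; if not, they are certainly not conjugate in $H$ and we return ``no''. Otherwise effectively produce some $g \in G$ with $g^{-1}ug = v$, which is a routine by-product of the standard conjugacy algorithm in a hyperbolic group. Conjugacy in $H$ is then equivalent to the requirement that the coset $gC_G(u)$, which is exactly the set of all conjugators from $u$ to $v$ in $G$, intersects $H$.

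If $u = 1$ the problem is trivial, so assume $u \neq 1$; since $H$ is torsion-free, $u$ has infinite order in $G$. By the basic fact recalled in the Preliminaries, $\langle u \rangle$ has finite index in $C_G(u)$. Suppose we can effectively produce a finite list of left coset representatives $c_1,\ldots,c_n \in G$ for $\langle u \rangle$ in $C_G(u)$. Then
\[
gC_G(u) \;=\; \bigsqcup_{i=1}^n gc_i\langle u\rangle,
\]
and, since $u \in H$ gives $\langle u \rangle \subseteq H$, this set meets $H$ if and only if $gc_i \in H$ for some $i$. The latter is decidable by the assumed solvability of the membership problem for $H$ in $G$, and this completes the algorithm.

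The only nontrivial ingredient is therefore the effective computation of the coset representatives $c_1,\ldots,c_n$, and this is the main obstacle. It can be handled using standard effective properties of hyperbolic groups: one produces an explicit constant $N=N(\delta,|u|)$ such that every coset of $\langle u \rangle$ in $C_G(u)$ contains a representative of length at most $N$ in $\ga$. Enumerating all such words, discarding those that fail to commute with $u$ (using the word problem in $G$) and then removing duplicates modulo $\langle u \rangle$ (again via the word problem) yields the desired finite list. Equivalently, one may invoke the effective computability of the maximal elementary subgroup $E_G(u) \supseteq C_G(u)$, which is virtually cyclic with known structure. Once this input is in hand, the reduction above is complete.
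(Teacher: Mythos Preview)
Your approach is essentially the same as the paper's: reduce to conjugacy in $G$, find one conjugator, then test finitely many translates against the membership oracle for $H$, using that $C_G(u)$ is virtually $\langle u\rangle$ and that $u\in H$. The paper cites \cite[Prop.~4.11]{B-M-V} for the effective computation of coset representatives of $\langle u\rangle$ in $C_G(u)$, while you sketch a direct argument via a length bound; either is acceptable.

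There is, however, a genuine (if easily repaired) slip. With $g^{-1}ug=v$, the set of all conjugators from $u$ to $v$ in $G$ is $C_G(u)\,g$ (equivalently $g\,C_G(v)$), \emph{not} $g\,C_G(u)$: if $w^{-1}uw=v=g^{-1}ug$ then $gw^{-1}\in C_G(u)$, so $w\in C_G(u)g$. Your condition ``$gC_G(u)$ meets $H$'' is therefore not the right one, and it is not equivalent to conjugacy of $u,v$ in $H$ in general. The fix is immediate: work with $C_G(u)g=\bigsqcup_i \langle u\rangle\,c_i\,g$ using right coset representatives $c_i$; since $\langle u\rangle\le H$, this meets $H$ iff some $c_ig\in H$, which is decidable by the membership oracle. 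With this correction your proof coincides with the paper's.
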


\begin{proof} Consider arbitrary $x, y \in H$; since the word problem in $G$ (and hence in $H$)
is solvable, without loss of generality we can assume that $x \neq 1$.
As $G$ is hyperbolic, we can decide if $x,y$ are
conjugate in $G$. If they are not,
then neither are they conjugate in $H$. If they are conjugate in $G$ we may, by enumeration, find a
conjugator $g \in G$ such
that $x^g:=g^{-1} x g=y$ (note that this relies on $G$ being finitely generated and having
solvable word
problem). It is then clear
that $x,y$ are conjugate in $H$ if and only if $g \in C_G(x)H$. However, since $H$ is torsion
free, $C_G(x)$ must be virtually cyclic.
Therefore, the subgroup generated by $x$ has finite index in $C_G(x)$, and, furthermore, it
is possible to algorithmically find a finite set of coset
representatives for $\langle x \rangle$ in $C_G(x)$(see \cite[Prop. 4.11]{B-M-V}).
We shall call these $h_1, \ldots , h_k$.
Therefore, $x$ and $y$ are conjugate in $H$ if and only if $h_i^{-1} g \in H$ for some $i$.
Since the hypothesis allows us to decide this,
we have solved the conjugacy problem in $H$.
\end{proof}

\begin{rem} The assumption that $H$ is finitely generated in  Proposition \ref{prop:memb->conj} is not really needed:
the set of words representing elements from $H$ is recursive because
there is an algorithm that solves the membership problem for $H$ in $G$.
\end{rem}

\begin{prop}\label{prop:conj->memb}
Let $G$ be a hyperbolic group and $H$ a finitely generated subgroup of $G$.
If $H$ is normal and has solvable conjugacy problem, then it also has solvable membership problem in $G$.
\end{prop}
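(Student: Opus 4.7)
The plan is to reduce the membership problem for $H$ in $G$ to the conjugacy problem in $H$, using also the solvability of the word problem in the hyperbolic group $G$. Explicitly, I would show that both ``$g\in H$'' and ``$g\notin H$'' are recursively enumerable; combined these yield a decision algorithm.

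The ``$g\in H$'' side is standard: fix a finite generating set $\{h_1,\dots,h_n\}$ of $H$, enumerate words in the $h_i^{\pm 1}$, and test each for equality with $g$ using the word problem in $G$. For the ``$g\notin H$'' side I would exploit normality: for every $x\in H$ the element $g^{-1}xg$ is again in $H$, and if $g\in H$ then $g^{-1}xg\sim_H x$ (with $g$ itself as conjugator). Hence any $x\in H$ satisfying $g^{-1}xg\not\sim_H x$ certifies $g\notin H$. Algorithmically, I would enumerate $x$ as words in the $h_i^{\pm 1}$, compute $g^{-1}xg$ as a word in the $G$-generators, and, since this element lies in $H$, find an equivalent word in the $h_i^{\pm 1}$ by parallel enumeration plus the word problem in $G$; finally invoke the $H$-conjugacy algorithm on the two resulting $H$-words.

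The main obstacle is to verify the underlying existential statement: \emph{for every $g\in G\setminus H$ there is some $x\in H$ with $g^{-1}xg\not\sim_H x$}. I would first clear away easy cases: if $H$ is finite, the membership problem is trivially decidable; and if $H$ is infinite virtually cyclic, then $H$ being normal in the hyperbolic group $G$ forces $G$ itself to be virtually cyclic, so all relevant decision problems are standard. This reduces matters to $H$ being non-elementary, in which case the finite subgroup $E_G(H)$ is defined; for any $x\in H$ of infinite order, $C_G(x)$ is virtually cyclic with $\langle x\rangle$ of finite index, and a short coset calculation identifies $\{g\in G : g^{-1}xg\sim_H x\}$ with $C_G(x)\cdot H$. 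The heart of the matter is then to show that $\bigcap_{x} C_G(x)\cdot H = H$ as $x$ ranges over a sufficiently rich family of infinite-order elements of $H$; the naive estimate only yields containment in $C_G(H)\cdot H$, so the residual finitely many $H$-cosets sitting inside the finite extension $E_G(H)\cdot H$ would need to be handled separately, for instance by algorithmically describing $E_G(H)$ and testing each of these cosets against $H$ using the word problem in $G$.
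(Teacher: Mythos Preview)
Your overall strategy---running semi-decision procedures for ``$g\in H$'' and ``$g\notin H$'' in parallel---is different from the paper's, and the hard direction is exactly where you locate it. However, there is a real gap in your argument. You assert that the ``naive estimate'' gives
\[
\bigcap_{x\in H^0} C_G(x)\,H \;\subseteq\; C_G(H)\,H,
\]
but this inclusion is \emph{not} naive. Unwinding it, you are claiming that if conjugation by $g$ is a \emph{pointwise inner} automorphism of $H$ (i.e.\ $g^{-1}xg\sim_H x$ for every $x$), then it is actually \emph{inner} on $H$. That is precisely the content of the Minasyan--Osin result (used elsewhere in the paper as Lemma~4.6), and even that result requires the hypothesis $E_G(H)=\{1\}$; in the presence of torsion one would first have to pass to $G/E_G(G)$ and argue that $E_{\bar G}(\bar H)=\{1\}$ there. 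None of this is supplied, and it is considerably deeper than the proposition you are trying to prove. Your proposed patch---``algorithmically describe $E_G(H)$ and test each coset against $H$ using the word problem''---is also problematic as stated: the word problem in $G$ does not decide membership in $H$, so you would at minimum need to hard-code the finite list of which elements of $E_G(H)$ lie in $H$, and you have not explained how one establishes that the residual set is indeed contained in $E_G(H)\,H$ to begin with.

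The paper's proof avoids all of this by a much more elementary device: it fixes a \emph{single} infinite-order element $x\in H$ and hard-codes a finite set $\{1=f_1,\dots,f_k\}$ of coset representatives for $C_H(x)$ in the virtually cyclic group $C_G(x)$. Given $g$, one first tests whether $x^g\sim_H x$; if not, $g\notin H$. If yes, one searches for $h\in H$ with $x^g=x^h$, so that $gh^{-1}\in C_G(x)$; then one determines, by enumerating $H$, the unique $i$ with $gh^{-1}f_i^{-1}\in H$ (this search is guaranteed to terminate since $gh^{-1}$ lies in some coset $C_H(x)f_i$ and $H$ is normal), and concludes $g\in H$ if and only if $i=1$. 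The point is that a single element $x$ already suffices once you exploit the finite index of $C_H(x)$ in $C_G(x)$, so no appeal to pointwise-inner-equals-inner is needed.
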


\begin{proof}

If $H$ is finite, the claim follows from the solvability of the word problem in $G$.
So we may assume that $H$ is infinite. In this case there exists an element $x \in H$ of infinite
order (by \cite[8.3.36]{Ghys-Harpe}).
Consequently, $\langle x \rangle$ has finite index in $C_G(x)$ and so $C_H(x)$ has finite
index in $C_G(x)$.
Thus there exists a finite collection of elements $1=f_1, f_2, \ldots ,f_k$ in $G$, which are the
right coset representatives of $C_H(x)$ in $C_G(x)$.
Since the element $x$ is fixed throughout, we may assume that these are given (as words in the
generators of $G$).

Consider some $g \in G$; we need to decide whether or not it lies in $H$.
Since $H$ is normal, $x^g \in H$. Clearly, if $x^g$ is not conjugate to $x$ {\em in} $H$,
then $g \notin H$ and we are done. Otherwise,
enumerating all elements of $H$, we will be able to find an $h \in H$ such that $x^g = x^h$.
Hence $gh^{-1} \in C_G(x)$ and $g \in H$ if and only if $gh^{-1} \in C_H(x)$.
Now we know that $g h^{-1} f_i^{-1} \in H$ for some $i$. As $H$ is finitely
generated and $G$ has solvable word problem, we
may enumerate all elements of $H$, and for each $i$ check whether  $g h^{-1} f_i^{-1}$
is equal to this element of $H$. By
construction, this process is guaranteed to terminate after finitely many steps, and $g \in H$
if and only if the process
tells us that $g h^{-1} f_1^{-1} \in H$.
\end{proof}

\begin{proof}[Proof of Theorem \ref{thm:dec_w_p<->conj_probl}] It is easy to see that
for a normal subgroup $H$ of a group $G$, the membership problem to $H$ in $G$
is equivalent to the word problem in the quotient $G/H$. Therefore, the claim of
Theorem \ref{thm:dec_w_p<->conj_probl} immediately follows from Propositions \ref{prop:conj->memb} and
\ref{prop:memb->conj}.
\end{proof}

\section{Conjugacy separable subgroups}\label{sec:conj_sep}
As Mal'cev proved in \cite{Malcev}, if $H$ is a finitely generated separable subgroup
of a finitely presented group $G$, then
the membership problem for $H$ in $G$ is solvable.
Therefore one can see that the next fact is a ``profinite analogue'' of Proposition \ref{prop:memb->conj}.

\begin{prop}\label{prop:sep->conj_sep} Let $H$ be a torsion-free subgroup
of a hyperbolic group $G$. If $G$ is hereditarily conjugacy separable and
$H$ is separable in $G$ then $H$
is conjugacy separable.
\end{prop}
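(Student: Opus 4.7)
The plan is to mirror the argument for Proposition~\ref{prop:memb->conj}, translating statements about decidability into statements about separation by finite quotients. Given $x,y\in H$ that are non-conjugate in $H$, the goal is to produce a finite index normal subgroup of $H$ modulo which their images remain non-conjugate. The case $x=1$ reduces to residual finiteness of $H$, which holds because $G$, being hereditarily conjugacy separable, is residually finite and $H\le G$. So one may assume $x\neq 1$; since $H$ is torsion-free, $x$ then has infinite order in the hyperbolic group $G$. If $x$ and $y$ are already non-conjugate in $G$, a finite quotient of $G$ witnessing this (obtained from conjugacy separability of $G$) restricts to a finite quotient of $H$ witnessing non-conjugacy there. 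So assume there exists $g\in G$ with $g^{-1}xg=y$.

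Exactly as in Proposition~\ref{prop:memb->conj}, the condition $x\not\sim_H y$ is equivalent to $g\notin C_G(x)H$. Since $x$ has infinite order in the hyperbolic group $G$, the centralizer $C_G(x)$ is virtually cyclic and contains $\langle x\rangle$ as a subgroup of finite index; fix right coset representatives $h_1,\ldots,h_k$ of $\langle x\rangle$ in $C_G(x)$. Because $\langle x\rangle\subseteq H$, one has $C_G(x)H=\bigcup_{i=1}^k h_iH$, so $h_i^{-1}g\notin H$ for every $i$. Separability of $H$ in $G$ provides, for each $i$, a finite index normal subgroup $K_i\lhd G$ with $h_i^{-1}g\notin HK_i$; setting $K:=\bigcap_i K_i$ yields a single finite index normal subgroup of $G$ with $h_i^{-1}g\notin HK$ for all $i$. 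Using $\langle x\rangle\subseteq H$ once more, this rephrases as $g\notin C_G(x)KH$.

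The hereditary conjugacy separability of $G$ should now enter in its standard ``centralizer-lifting'' form (cf.\ \cite{Min-RAAG}): for the given $x$ and the finite index normal subgroup $K\lhd G$, there exists a finite index normal subgroup $N\lhd G$ with $N\le K$ such that any $z\in G$ with $z^{-1}xz\equiv x\pmod{N}$ lies in $C_G(x)K$. Suppose, for contradiction, that some $h\in H$ satisfies $h^{-1}xh\equiv y\pmod{N}$. Combined with $y=g^{-1}xg$, this gives $(gh^{-1})x(gh^{-1})^{-1}\equiv x\pmod{N}$, so the lifting property forces $gh^{-1}\in C_G(x)K$, hence $g\in C_G(x)Kh\subseteq C_G(x)KH$, contradicting the conclusion of the previous paragraph. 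Thus no such $h$ exists, and $x,y$ are non-conjugate in the finite quotient $H/(H\cap N)$ of $H$.

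The main obstacle is identifying the precise form of hereditary conjugacy separability needed for the centralizer-lifting step and verifying it applies to our hyperbolic group $G$; once that technical ingredient is in place, the remainder of the argument is a direct profinite transcription of Proposition~\ref{prop:memb->conj}, with ``enumerate a conjugator and test its coset'' replaced by ``use separability of $H$ to push the coset obstruction into a finite quotient.''
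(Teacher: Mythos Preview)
Your argument is correct; the ``centralizer-lifting'' ingredient you hesitate over is precisely the Centralizer Condition~(CC) of \cite{Min-RAAG}, and for a conjugacy separable group CC is equivalent to hereditary conjugacy separability \cite[Prop.~3.2]{Min-RAAG}, so it is available for $G$ and your proof goes through.

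The paper, however, proceeds by a more self-contained route that avoids invoking this characterization. Instead of pushing the obstruction $g\notin C_G(x)H$ into a finite quotient via CC, the paper first uses separability of $H$ to pass to a finite-index subgroup $K\le G$ containing $H$ in which the centralizer $C_K(x)$ already lies inside $H$ (one simply excludes the finitely many coset representatives of $C_H(x)$ in $C_G(x)$ that lie outside $H$). Inside $K$, any conjugator $g$ with $gxg^{-1}=y$ must lie outside $H$; separating $g$ from $H$ by a further finite-index subgroup $M\le K$, one checks directly that $x$ and $y$ are non-conjugate in $M$ (since any $M$-conjugator would differ from $g$ by an element of $C_K(x)\subseteq H$). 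Conjugacy separability of $M$---the only place hereditary conjugacy separability is used, and only in its literal form---then finishes the job. Your route is a clean profinite transcription of Proposition~\ref{prop:memb->conj} and pinpoints where HCS enters, but imports the CC\,$\Leftrightarrow$\,HCS equivalence as a black box; the paper's route trades that for a slightly more hands-on manipulation of centralizers.
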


\begin{proof} Consider any elements $x,y \in H$ that are not conjugate in $H$.
Without loss of
generality, we may assume that $x \neq 1$. Then $x$ must have infinite order, by our hypothesis, and
therefore $|C_G(x):\langle x \rangle|<\infty$. Let $L:=C_H(x)=C_G(x) \cap H$,
then $x \in L$ and one can find elements $z_1,\dots,z_k \in C_G(x)\setminus H$
such that $C_G(x)=L \sqcup \bigsqcup_{i=1}^k z_iL$.

Since $H$ is separable in $G$, there exists a finite index subgroup $K \le G$ such that $H \le K$
and $z_i \notin K$ for every $i=1,\dots,k$. Observe that, by construction,
the centralizer $C_K(x)$, of $x$ in $K$, is contained in $H$ (in fact, it is equal to $L$).

Assume, first, that $y=gxg^{-1}$ for some $g \in K$. Since $x$, $y$ are not conjugate in $H$ we know that
$g \notin H$. Also, $H$ is closed in $\PT(K)$ because $H$ is closed in $\PT(G)$ and $K \le G$.
Hence, there is a finite index subgroup $M \le K$ such that $H \le M$ and $g \notin M$. We claim that
$x$ is not conjugate to $y$ in $M$. Indeed, otherwise, there would exist $u \in M$
with $uxu^{-1}=y=gxg^{-1}$, implying that $u^{-1}g \in C_K(x) \le H$, i.e., $g \in uH \subseteq M$, which
contradicts the choice of $M$.

Thus we proved that there is a finite index subgroup
$N \le G$ such that $H \le N$ and $x$ is not conjugate to $y$
in $N$. Using the conjugacy separability of $N$, we can
find a finite group $Q$ and a homomorphism $\varphi:N \to Q$ such that $\varphi(x)$
is not conjugate to $\varphi(y)$ in $Q$. And since $H \le N$, the restriction of $\varphi$ to $H$
gives a finite quotient of $H$, in which the images of $x$ and $y$ are not conjugate.
\end{proof}

Next comes a natural analogue of Proposition \ref{prop:conj->memb}. Observe that, unlike
the previous statement, it does not need to assume that $G$ is
hereditarily conjugacy separable, but requires the subgroup $H$ to be normal and finitely generated.

\begin{prop}\label{prop:conj_sep->sep_quot} Let $H$ be a finitely generated
non-elementary normal subgroup
of a hyperbolic group $G$, with $E_G(H)=\{1\}$. If $H$ is conjugacy separable then
$G/H$ is residually finite.
\end{prop}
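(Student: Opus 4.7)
The aim is to show that $G/H$ is residually finite, i.e.\ for every $g \in G \setminus H$ there is a finite index normal subgroup $N \lhd G$ with $H \le N$ and $g \notin N$. My approach would be to produce a ``profinite analogue'' of the argument for Proposition \ref{prop:conj->memb}: pick a suitable infinite-order element $x \in H$, use normality of $H$ to force $gxg^{-1}$ into $H$, and then exploit the conjugacy separability of $H$ to detect $g \notin H$ inside a finite quotient of $G$.

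The first step, which I expect to be the main obstacle, is to produce an infinite-order element $x \in H$ with $C_G(x) \le H$. The hypotheses that $H$ is non-elementary and $E_G(H) = \{1\}$ are designed exactly for this: by a standard ``generic element'' result for non-elementary subgroups of hyperbolic groups (going back to Olshanskii's work on $G$-subgroups), one can find $x \in H$ of infinite order with $E_G(x) = \langle x \rangle \cdot E_G(H) = \langle x \rangle$, whence $C_G(x) \le E_G(x) = \langle x \rangle \le H$. This is the only place the assumption $E_G(H) = \{1\}$ enters the proof, and I would either cite it as a lemma or spell it out in a separate preparatory statement.

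With such an $x$ fixed, take any $g \in G \setminus H$ and set $y := gxg^{-1} \in H$. If there were $h \in H$ with $hxh^{-1} = y$, then $h^{-1}g \in C_G(x) \le H$ would force $g \in H$, a contradiction; hence $x$ and $y$ are not conjugate in $H$. Conjugacy separability of $H$ then supplies a finite quotient $\pi\colon H \to F$ with $\pi(x) \not\sim_F \pi(y)$. Since $H$ is finitely generated, it has only finitely many subgroups of index $[H:\ker \pi]$, so the intersection $K$ of all such subgroups is a characteristic finite index subgroup of $H$ contained in $\ker \pi$; in particular $\bar x$ and $\bar y$ remain non-conjugate in $\bar H := H/K$. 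Because $K$ is characteristic in $H$ and $H \lhd G$, we also have $K \lhd G$.

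It remains to separate $g$ from $H$ by a finite quotient of $G$. Set $\bar G := G/K$, so that $\bar H$ is finite and normal in $\bar G$. The relation $\bar y = \bar g \bar x \bar g^{-1}$ together with $\bar x \not\sim_{\bar H} \bar y$ forces $\bar g \notin \bar H \cdot C_{\bar G}(\bar x)$, and since $C_{\bar G}(\bar H) \le C_{\bar G}(\bar x)$ we conclude $\bar g \notin \bar H \cdot C_{\bar G}(\bar H)$. Because $\bar H$ is finite, the conjugation action of $\bar G$ on $\bar H$ factors through the finite group $\mathrm{Aut}(\bar H)$, so $C_{\bar G}(\bar H)$, and hence $\bar H \cdot C_{\bar G}(\bar H)$, has finite index in $\bar G$. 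Pulling this back to $G$ gives a finite index subgroup of $G$ containing $H$ and missing $g$; its normal core is the desired $N \lhd G$, which completes the proof.
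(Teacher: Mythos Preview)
Your proof is correct and takes a genuinely different route from the paper's. The paper assembles three ingredients: Lemma~\ref{lem:Min-Os} (the Minasyan--Osin result that $Aut_{p.i.}(H) = Inn(H)$ for such $H$), Grossman's criterion (Lemma~\ref{lem:Gros}) that this together with conjugacy separability forces $Out(H)$ to be residually finite, and the elementary observation (Lemma~\ref{lem:out-gp}) that $G/H$ embeds into $Out(H)$ because $C_G(H) \le E_G(H) = \{1\}$. Residual finiteness of $G/H$ then follows in one line.

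Your argument sidesteps both Grossman's criterion and the full Minasyan--Osin lemma, relying only on the Olshanskii fact that one can find $x \in H$ with $E_G(x) = \langle x \rangle$; this makes your proof more self-contained. Interestingly, your approach is essentially the method the paper itself adopts later for the analogous fibre-product statement (Proposition~\ref{prop:fibre_conj_sep->rf_quot}): find such an $x$, separate $x$ from $gxg^{-1}$ in a finite quotient of $H$, pass to a characteristic-in-$H$ subgroup so the quotient becomes $G$-invariant, and detect the image of $g$ via the conjugation action of $G$ on the resulting finite normal subgroup. So what the paper packages abstractly through $Out(H)$ for Proposition~\ref{prop:conj_sep->sep_quot}, you carry out concretely --- and what you do concretely is precisely the strategy the paper uses in the fibre-product setting. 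One minor simplification available to you: since $C_{\bar G}(\bar H)$ is the kernel of the conjugation map $\bar G \to Aut(\bar H)$, it is already normal in $\bar G$, so $\bar H \cdot C_{\bar G}(\bar H)$ is normal as well and no passage to a normal core is needed at the end.
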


The proof of Proposition \ref{prop:conj_sep->sep_quot} makes use of the three lemmas below.
The next statement is well-known and can be verified in a straightforward manner
(see, for example, \cite[Lemma 5.2]{Min-CC}).

\begin{lemma} \label{lem:out-gp} Assume $G$ is a group and $H \lhd G$ is a normal subgroup such that
$C_G(H)\subseteq H$. Then the quotient-group $G/H$ embeds into the outer automorphism group
$Out(H)$.
\end{lemma}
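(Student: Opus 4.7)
The plan is to build the embedding via the natural conjugation action of $G$ on $H$. Since $H$ is normal in $G$, every $g \in G$ yields an automorphism $\varphi_g \in Aut(H)$ defined by $\varphi_g(h) := ghg^{-1}$, and $g \mapsto \varphi_g$ is a homomorphism $\Phi : G \to Aut(H)$. Composing with the quotient map $\pi : Aut(H) \to Out(H)$ gives a homomorphism $\bar\Phi := \pi \circ \Phi : G \to Out(H)$, and the whole task reduces to identifying $\ker \bar\Phi$ with $H$.

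For the inclusion $H \subseteq \ker \bar\Phi$, note that for $h \in H$ the automorphism $\varphi_h$ is, by definition, an inner automorphism of $H$, hence $\pi(\varphi_h) = 1$ in $Out(H)$. For the reverse inclusion $\ker \bar\Phi \subseteq H$, suppose $g \in G$ satisfies $\bar\Phi(g) = 1$. Then $\varphi_g$ is inner on $H$, so there exists $h \in H$ with $\varphi_g = \varphi_h$ on $H$, i.e.\ $gxg^{-1} = hxh^{-1}$ for every $x \in H$. This is equivalent to $h^{-1}g \in C_G(H)$. Invoking the standing hypothesis $C_G(H) \subseteq H$ gives $h^{-1}g \in H$, whence $g \in H$.

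Combining these two inclusions yields $\ker \bar\Phi = H$, and the First Isomorphism Theorem produces an injective homomorphism $G/H \hookrightarrow Out(H)$, as required. There is no real obstacle: the only subtle point is keeping straight the distinction between conjugation of $x \in H$ by an element of $G$ (which a priori only lives in $Aut(H)$) and conjugation by an element of $H$ itself (which is inner), and this distinction is precisely what the hypothesis $C_G(H) \subseteq H$ is designed to control.
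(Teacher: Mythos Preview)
Your proof is correct and is precisely the standard argument. The paper itself does not supply a proof of this lemma; it simply records the statement as well known and refers to \cite[Lemma 5.2]{Min-CC}, so your write-up fills in exactly the straightforward verification the authors had in mind.
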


An automorphism $\psi$ of a group $G$ is said to be \textit{pointwise inner}, if for each $g \in G$,
$\psi(g)$ is conjugate to $g$ in $G$. It is not difficult to see that the set of all pointwise
inner automorphisms forms a normal subgroup $Aut_{p.i.}(G) \lhd Aut(G)$, and $Inn(G) \le Aut_{p.i.}(G)$.
The following criterion was found by E. Grossman in \cite[Thm. 1]{Grossman}:

\begin{lemma} \label{lem:Gros} Let $H$ be a finitely generated conjugacy separable group
with  $Aut_{p.i.}(H)=Inn(H)$. Then $Out(H)$ is residually finite.
\end{lemma}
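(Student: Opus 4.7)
My plan is to prove Grossman's criterion directly, using conjugacy separability to build finite quotients of $Out(H)$ that separate any prescribed non-inner automorphism from the identity.

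Fix $\phi \in Aut(H) \setminus Inn(H)$; it suffices to produce a finite quotient of $Out(H)$ in which the class $[\phi]$ is non-trivial. The hypothesis $Aut_{p.i.}(H)=Inn(H)$ immediately gives an element $h \in H$ such that $\phi(h)$ is not conjugate to $h$ in $H$. Using the conjugacy separability of $H$, I would then find a finite group $Q$ and a homomorphism $\psi:H \to Q$ with $\psi(\phi(h))$ not conjugate to $\psi(h)$ in $Q$. This pair $(h,\psi)$ will serve as the ``witness'' that $\phi$ remains non-inner after reduction modulo a suitable characteristic subgroup.

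The key construction is the finite-index characteristic subgroup $N \lhd H$ defined as the intersection of all subgroups of $H$ of index at most $|Q|$. Because $H$ is finitely generated, there are only finitely many such subgroups, so $N$ has finite index, and by construction $N$ is invariant under every automorphism of $H$; in particular $N \subseteq \ker\psi$, so $\psi$ factors through the finite group $\bar{H}:=H/N$. The automorphism-invariance of $N$ gives a natural homomorphism $\Phi:Out(H) \to Out(\bar{H})$, and $Out(\bar{H})$ is finite since $\bar H$ is finite. It remains to show $\Phi([\phi]) \neq 1$.

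Suppose for contradiction that the induced automorphism $\bar\phi \in Aut(\bar H)$ is inner, say $\bar\phi(\bar x)=\bar g \bar x \bar g^{-1}$ for some $\bar g \in \bar H$ and all $\bar x \in \bar H$. Lifting $\bar g$ to $g \in H$, this reads $\phi(x)N = g x g^{-1} N$ for every $x \in H$. Applying $\psi$ (which factors through $H/N$) then yields $\psi(\phi(x)) = \psi(g)\psi(x)\psi(g)^{-1}$ in $Q$ for all $x \in H$. Specializing to $x=h$ contradicts the choice of $\psi$, so $\Phi([\phi])\neq 1$ as required.

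The only subtlety is the existence of the characteristic finite-index subgroup $N$, for which finite generation of $H$ is essential; everything else is a packaging of conjugacy separability together with the hypothesis that the only pointwise inner automorphisms are inner. I do not expect any genuine obstacle beyond verifying that the map $Out(H)\to Out(\bar H)$ is well defined, which is automatic from the characteristic nature of $N$.
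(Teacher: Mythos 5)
Your proof is correct: the paper gives no argument for this lemma (it is quoted directly from \cite[Thm.~1]{Grossman}), and your reconstruction is essentially Grossman's original one — use $Aut_{p.i.}(H)=Inn(H)$ to find a witness $h$ with $\phi(h)\notin h^H$, separate the conjugacy classes in a finite quotient, and pass to the characteristic finite-index subgroup $N=\bigcap\{K\le H : [H:K]\le |Q|\}$ (which exists by finite generation and contains... rather, is contained in $\ker\psi$ because $[H:\ker\psi]\le|Q|$) to get a finite quotient $Out(H)\to Out(H/N)$ detecting $[\phi]$. The only cosmetic slip is the phrase ``in particular $N\subseteq\ker\psi$'' following the characteristicity claim: that containment comes from $\ker\psi$ being one of the intersected subgroups, not from $N$ being characteristic, but your construction clearly supplies it.
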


The last ingredient is given by  \cite[Cor. 5.4]{Min-Osin}, where D. Osin  and the second author
showed that $Aut_{p.i.}(H)=Inn(H)$ for any non-elementary subgroup $H$ of a relatively hyperbolic group
$G$, provided $H$ contains at least one infinite order element that is not conjugate to an element
of a parabolic subgroup in $G$, and $E_G(H) = \{ 1 \}$. Since every hyperbolic group is relatively hyperbolic with respect to
the trivial subgroup (see \cite{Osin-rel_hyp}),
and an infinite subgroup of a hyperbolic group necessarily contains
an element of infinite order (\cite[8.3.36]{Ghys-Harpe}), we obtain

\begin{lemma} \label{lem:Min-Os} If $H$ is a non-elementary subgroup of a hyperbolic group $G$
with $E_G(H)=\{1\}$, then $Aut_{p.i.}(H)=Inn(H)$.
\end{lemma}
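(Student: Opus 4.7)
The plan is to deduce this lemma directly from the cited result \cite[Cor.~5.4]{Min-Osin}, whose statement is recalled in the paragraph preceding the lemma. That corollary establishes $Aut_{p.i.}(H)=Inn(H)$ whenever $H$ is a non-elementary subgroup of a relatively hyperbolic group $G$ with $E_G(H)=\{1\}$, under the extra requirement that $H$ contain an infinite order element not conjugate into any parabolic subgroup. The task reduces to checking that these hypotheses hold automatically in the (ordinary) hyperbolic setting.

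First, I would invoke \cite{Osin-rel_hyp} to view the hyperbolic group $G$ as a relatively hyperbolic group with respect to the trivial subgroup $\{1\}$. With this choice, the only parabolic subgroup is $\{1\}$ itself, so the only element of $G$ that is conjugate into a parabolic subgroup is the identity. Consequently, the ``not conjugate to a parabolic element'' condition from \cite[Cor.~5.4]{Min-Osin} simplifies to the requirement that $H$ contain a non-trivial infinite order element.

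Next I would verify that such an element exists. By assumption, $H$ is non-elementary and thus infinite. By \cite[8.3.36]{Ghys-Harpe}, every infinite subgroup of a hyperbolic group contains an element of infinite order; pick any such $x\in H$. Since $x\neq 1$, it cannot be conjugate into the parabolic subgroup $\{1\}$. Combined with the hypothesis $E_G(H)=\{1\}$ and the fact that $H$ is non-elementary, all hypotheses of \cite[Cor.~5.4]{Min-Osin} are met, and the conclusion $Aut_{p.i.}(H)=Inn(H)$ follows immediately.

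There is essentially no obstacle, since the lemma is a direct specialization of the quoted corollary; the only ``work'' is the book-keeping above, namely recognizing hyperbolic groups as relatively hyperbolic with trivial peripheral structure and exhibiting an infinite order element in $H$. The genuine mathematical content sits in \cite[Cor.~5.4]{Min-Osin} itself, which is being applied as a black box.
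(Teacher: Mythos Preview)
Your proposal is correct and follows exactly the same route as the paper: the paper likewise deduces the lemma from \cite[Cor.~5.4]{Min-Osin} by viewing the hyperbolic group $G$ as relatively hyperbolic with trivial peripheral structure (via \cite{Osin-rel_hyp}) and invoking \cite[8.3.36]{Ghys-Harpe} to produce an infinite order element in $H$. There is nothing to add.
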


\begin{proof}[Proof of Proposition \ref{prop:conj_sep->sep_quot}] By Lemmas \ref{lem:Min-Os}
and \ref{lem:Gros}, $Out(H)$ is residually finite. And $G/H$ embeds in $Out(H)$ according
to Lemma \ref{lem:out-gp}, because $C_G(H)\subseteq E_G(H)=\{1\}$. Therefore,
$G/H$ is residually finite as well.
\end{proof}

\begin{rem} We do not know whether one can remove the assumption that $H$ is non-elementary
from Proposition \ref{prop:conj_sep->sep_quot},
because this is directly related to the well-known open question about the existence of
non-(residually finite) hyperbolic groups.
\end{rem}

\begin{rem} The condition $E_G(H)=\{1\}$ in Proposition \ref{prop:conj_sep->sep_quot} is equivalent to the condition $E_G(G)=\{1\}$.
In other words, it says that $G$ contains no non-trivial finite normal subgroups.
\end{rem}

Indeed, since $E_G(H)$ is the (only) maximal finite subgroup of $G$ normalized by $H$ and $H \lhd G$, it is easy to see
that $E_G(H) \lhd G$, hence $E_G(H) \le E_G(G)$. Evidently, $E_G(G) \le E_G(H)$, hence $E_G(H)=E_G(G)$.

We are now ready to prove Theorem \ref{thm:sep<->conj_sep}.
\begin{proof}[Proof of Theorem \ref{thm:sep<->conj_sep}] The sufficiency is an immediate
consequence of Proposition \ref{prop:sep->conj_sep}, because a normal subgroup is separable if and only if
the quotient by it is residually finite.

In order to prove the necessity, suppose that $H$ is conjugacy separable. If $H$ is virtually cyclic,
then $H$ is quasiconvex in $G$ (see \cite[Cor. 3.4]{Mihalik}). And since $H\lhd G$,
by  \cite[Prop. 3.9]{Mihalik},
$H$ is either finite or has finite index in $G$. In either of these two cases, $G/H$ is residually
finite: in the latter case this is obvious and in the former case this follows from the fact that
$G$ is residually finite (since it is conjugacy separable),
because any finite subset of a residually finite group  is separable.

Thus we can assume that $H$ is non-elementary. Observe that $E_G(H)=\{1\}$ because $G$ is torsion-free.
Consequently, $G/H$ is residually finite by Proposition \ref{prop:conj_sep->sep_quot}.
\end{proof}


\section{Hereditarily conjugacy separable Rips's construction}\label{sec:Rips}

Rips's construction, discovered by E. Rips \cite{Rips},
is a very useful tool, that turned out to be a rich source of counterexamples,
allowing one to find finitely generated subgroups
of word hyperbolic groups with exotic properties (e.g., subgroups that are finitely generated but not finitely
presented; finitely generated subgroups with undecidable membership problem; etc.).
Briefly speaking, for every finitely presented group $P$, Rips's construction produces
a hyperbolic group $G$ together with a finitely generated normal subgroup $N \lhd G$ such that
$G/N \cong P$. In fact, in addition to being hyperbolic, the group $G$ can be made to
enjoy other agreeable properties. For instance, in \cite{Wise-rips} Wise provided a modification of
Rips's construction, in which the group $G$ is residually finite. More recently,
Haglund and Wise \cite{H-W_1} suggested a different version of Rips's construction
(and proved its key properties), producing
$G$ as the fundamental group of a compact non-positively curved square
thin $\mathcal{VH}$-complex. In this case the group $G$ will be linear and will have separable
quasiconvex subgroups (see \cite{H-W_1}). Moreover, as we show below,
$G$ will also be hereditarily conjugacy separable.
In writing this paper we have become aware of the recent work of O. Cotton-Barratt and H. Wilton \cite{CB-W},
where it is shown that the residually finite version of Rips's construction, originally introduced
by Wise in \cite{Wise-rips}, is also conjugacy separable.

Thin $\mathcal{VH}$-complexes were introduced by Wise in \cite{Wise-polyg}.
For our purposes, we only need to know three
facts about them. The first fact is that if
$G$ is a fundamental group of compact thin $\mathcal{VH}$-complex $\mathcal X$, then it is word hyperbolic.
Indeed, as shown by Wise in \cite{Wise-polyg}, 
the universal cover
$\widetilde{\mathcal{X}}$ is a  ${\rm CAT}(0)$ space which contains no immersed flats. Hence
$\pi_1(\mathcal{X})$ is word hyperbolic by a theorem of M. Bridson \cite{Bridson}.
The second fact is that $\pi_1(\mathcal{X})$ is torsion-free (for instance, because it acts freely
on the locally compact ${\rm CAT}(0)$ space $\widetilde{\mathcal{X}}$, and any finite group
acting on such a space fixes at least one point  -- see \cite[II.2.8]{B-H}).
The third fact, proved by Haglund and Wise in \cite{H-W_1}, tells us that $G=\pi_1(\mathcal{X})$
belongs to the class $\avr$.


\begin{thm}\label{thm:Rips-h_c_s} Let $P$ be an arbitrary finitely presented group. Then there exist
a torsion-free word hyperbolic group $G$ and a finitely generated (normal) subgroup $N \lhd G$ such that
$G/N \cong P$. Moreover, such a group $G$ can be taken to satisfy all of the following conditions:
\begin{itemize}
	\item[(i)] $G$ is the fundamental group of a compact non-positively curved  thin $\mathcal{VH}$-complex;
	\item[(ii)] $G \in \avr$; 
	\item[(iii)] $G$ is hereditarily conjugacy separable.
\end{itemize}
\end{thm}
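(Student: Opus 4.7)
The plan is to apply the Haglund--Wise version of Rips's construction to $P$, and then invoke the machinery of right-angled Artin groups from \cite{Min-RAAG} to promote the resulting group to a hereditarily conjugacy separable one.

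First, I would apply the construction of \cite{H-W_1} to the given finitely presented group $P$. This produces a short exact sequence $1\to N\to G\to P\to 1$ with $N$ finitely generated and $G=\pi_1(\mathcal{X})$ for a compact non-positively curved thin $\mathcal{VH}$-complex $\mathcal{X}$, giving (i) by construction. The three facts about such complexes recalled just before the theorem then give the remaining ambient properties essentially for free: absence of immersed flats in $\widetilde{\mathcal{X}}$ combined with Bridson's criterion shows $G$ is word hyperbolic; the free action of $G$ on the locally compact CAT(0) space $\widetilde{\mathcal{X}}$ forces torsion-freeness; and the theorem of Haglund and Wise yields $G\in\avr$, establishing (ii).

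For (iii), I would invoke the main result of \cite{Min-RAAG}: right-angled Artin groups are hereditarily conjugacy separable, and this property is inherited by virtual retracts (a retraction $\rho:K\to H$ pulls a finite quotient of $K$ separating two conjugacy classes back to a finite quotient of $H$ doing the same, and any finite-index subgroup of a hereditarily conjugacy separable group is clearly hereditarily conjugacy separable). Therefore every group in $\vr$ is hereditarily conjugacy separable, and since $G\in\avr$ by (ii), a finite-index subgroup $G_0\le G$ is hereditarily conjugacy separable.

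The main obstacle is then to upgrade ``$G_0$ is hereditarily conjugacy separable'' to the same statement for $G$ itself. By Theorem \ref{thm:fp_ex}, which is in fact the whole point of the paper, this implication is \emph{false} for arbitrary groups, so the step cannot be formal and must use the specific features of $G$. I expect the argument to rely on an additional criterion from \cite{Min-RAAG} valid under a mild hypothesis such as $E_G(G)=\{1\}$---which holds here automatically, as $G$ is torsion-free---allowing separation data for conjugacy classes in $G_0$ to be pushed forward to $G$ using centralizer information inside the hyperbolic group $G$ (much as in the proof of Proposition \ref{prop:sep->conj_sep}, where coset representatives of $L=C_H(x)$ in $C_G(x)$ are used). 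Verifying this bridge from $\avr$ to hereditary conjugacy separability for $G$ is the crux of the theorem; (i) and (ii) are immediate consequences of the Haglund--Wise construction.
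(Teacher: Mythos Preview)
Your proposal is correct and matches the paper's approach: the paper likewise obtains (i), (ii), torsion-freeness and hyperbolicity directly from the Haglund--Wise construction, and then derives (iii) from (ii) by invoking \cite[Cor.~9.11]{Min-RAAG}, which is precisely the statement that torsion-free hyperbolic groups in $\avr$ are hereditarily conjugacy separable. You have correctly isolated the one nontrivial step---upgrading from a finite-index hereditarily conjugacy separable subgroup to $G$ itself---and correctly anticipated that it is handled in \cite{Min-RAAG} under the torsion-free hyperbolic hypothesis.
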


The above theorem is essentially due to Haglund and Wise (see \cite{H-W_1}).
Only the property (iii) is new,
but it follows from (ii) and the fact that torsion-free hyperbolic groups from the class $\avr$
are hereditarily conjugacy separable, which was proved by the second author in \cite[Cor. 9.11]{Min-RAAG}.

Finally, we note that for a given finite presentation of $P$, a finite presentation for
the group $G$ from Theorem \ref{thm:Rips-h_c_s} can be constructed explicitly (see
\cite[Thm. 10.1]{H-W_1}).


\section{Constructing non-hereditarily conjugacy separable groups}\label{sec:main_constr}
It is known that if a finitely presented group $Q$ has solvable word problem, then for any finite group
$F$, any extension $P$ of $F$ by $Q$ also has solvable word problem (cf. \cite[Lemma 4.7]{Miller-survey}).
This group $P$  will also be finitely presented, as is any (finitely presented)-by-(finitely presented)
group -- see \cite[Lemma 1]{Hall}. However,
residual finiteness of $Q$ is not always passed to $P$.
(The reader should recall and contrast with the fact that
residual finiteness {\em is} passed to arbitrary subgroups  and finite extensions.)

More precisely, our algorithm for constructing (non-hereditarily) conjugacy separable groups
takes as an input a short exact sequence of groups
\begin{equation} \label{eq:short_exact} \{1\} \to F \to P \to Q \to \{1\},
\end{equation}
where $F$ is a finite group, $Q$ is residually finite and $P$ is finitely presented but not
residually finite. The existence of such short exact sequences is non-trivial. We describe two
ways to construct them below.

As observed by J. Corson and T. Ratkovich
in \cite{Cors-Ratk}, if one has a short exact sequence $\{1\} \to M \to R \to Q \to \{1\}$ of groups,
where $M, Q$ are residually finite, $M$ is finitely generated and $R$ is not residually finite,
then one can find a finite quotient $F$ of $M$ and an extension $P$, of $F$ by $Q$, which is not residually finite.


In the current literature we were able to find two examples of non-(residually
finite) finitely presented (residually finite)-by-(residually finite) groups.

\begin{ex}\label{ex:1}
In \cite{Deligne}, P. Deligne proved that for every integer $n \ge 2$,  there is a
central extension $R$ of the infinite cyclic group $\langle a \rangle \cong\Z$ by the group
of integral symplectic matrices ${\rm Sp}(2n,\Z)$ ,
such that the element $a^2$ belongs to the kernel of every homomorphism from $R$ to a finite group
(more precisely, $R=\widetilde{{\rm Sp}(2n,\Z) }$
is the inverse image of ${\rm Sp}(2n,\Z)$ in the universal cover of ${\rm Sp}(2n,\mathbb{R})$).
Consequently, for every $m \in \N$, the group $R_m:=R/\langle a^m \rangle$ is a central
extension of the cyclic group $\Z / m\Z$ by ${\rm Sp}(2n,\Z)$.

The group ${\rm Sp}(2n,\Z)$ is finitely presented (since it is an arithmetic subgroup of the algebraic group
${\rm Sp}(2n,\mathbb{R})$ -- see \cite[Ch. 4.4, Thm. 4.2]{Plat-Rap})
and residually finite (as is any finitely generated
subgroup of a linear group, according to Mal'cev's theorem \cite{Mal'cev-matrix}).
Thus, the group $R_m$ is finitely presented and contains a
central cyclic subgroup $C_m$ of order $m$, such that $R_m/C_m \cong {\rm Sp}(2n,\Z)$.
And if $m \ge 3$, $R_m$ is not residually finite by the above theorem of Deligne.

The group $R_2$ may be residually finite, but in this case
$R_4$, mapping onto $R_2$, contains a finite index subgroup $P$ that avoids the generator $c$ of
the central subgroup $C_4$. By Deligne's result, $c^2 \in P$, hence we have a short
exact sequence $\{1\} \to \langle c^2 \rangle_2 \to P \to Q \to \{1\}$, where $Q$ is a
finite index subgroup of ${\rm Sp}(2n,\Z)$ (more precisely, $Q$ is the image of $P$ in ${\rm Sp}(2n,\Z)$).
Thus $Q$ is residually finite,
$P$ is a finitely presented extension of $\Z/2\Z$ by $Q$, and $P$ is not residually
finite (because $R_4$ is not).
\end{ex}

\begin{ex}\label{ex:2} More recently, P. Hewitt \cite{Hewitt} proved that there exists an extension $E$
of the free abelian group $A := \Z^3$ by ${\rm SL}(3,\Z)$ that is not residually finite
(however, no explicit constructions for $E$ are known so far).
It follows (see \cite{Cors-Ratk}), that for every sufficiently large $m \in \N$
there is a non-(residually finite) extension $E_m:=E/A^m$ of  $A/A^m \cong (\Z/m\Z)^3$ by ${\rm SL}(3,\Z)$.

It is well known that ${\rm SL}(3,\Z)$ is finitely presented and residually finite,
therefore the short exact sequence $\{1\} \to (\Z/m\Z)^3 \to E_m \to {\rm SL}(3,\Z) \to \{1\}$
enjoys the required properties.

Let $B_m:=A/A^m\cong (\Z/m\Z)^3$ denote the image of $A$ in $E_m$, and let $T_m:=C_{E_m}(B_m)$.
Then $B_m \le T_m$ and $|E_m:T_m|<\infty$ because $B_m$ is a finite normal subgroup of $E_m$.
Thus $T_m$ is a central extension of $B_m$ by a finite index subgroup of ${\rm SL}(3,\Z)$
(which is the image of $T_m$ under the epimorphism $E_m \to {\rm SL}(3,\Z)$).
As before, $T_m$ fails to be residually finite because it has finite index
in the non-(residually finite) group $E_m$. We can now argue
similarly to Example \ref{ex:1}, to produce, for every $q \ge 2$,
a central extension of the cyclic group
$C_q$, of order $q$, by a finite index subgroup of ${\rm SL}(3,\Z)$, which is not residually finite.

\end{ex}

{\bf Main construction.} Start with the short exact sequence \eqref{eq:short_exact} such that
$F$ is a finite group, $Q$ is residually finite and $P$ is finitely presented but not
residually finite. By Theorem \ref{thm:Rips-h_c_s} we can find a torsion-free hereditarily conjugacy
separable hyperbolic group $G$ and a finitely generated normal subgroup $N \lhd G$ such that $G/N \cong P$.
 Thus there is an epimorphism $\psi:G \to P$
with $\ker(\psi)=N$. Let $H \le G$ be the full preimage of $F$ under $\psi$.
We have the following commutative diagram:

\begin{equation}
\label{eq:cd}
\begin{CD}
  N    @>>> H @>>> G \\
@VVV        @VVV  @V{\psi}VV \\
\{1\} @>>> F    @>>> P @>>> Q
\end{CD}
\end{equation}

Then $H \lhd G$, $N \le H$ and $H$ is finitely
generated (since $H/N \cong F$ is finite).
Observe that $G/H \cong P/F \cong Q$ is residually finite, hence $H$ is conjugacy separable
according to Theorem \ref{thm:sep<->conj_sep}. On the other hand, $G/N \cong P$ is not residually finite,
and Theorem \ref{thm:sep<->conj_sep} implies that $N$ is not conjugacy separable.
And since $N$ has finite index in $H$, we see that the group
$H$ is conjugacy separable, but not hereditarily conjugacy separable.

\begin{rem}\label{rem:1} The group $G$ in the above construction can be chosen from the class $\vr$.
Therefore the (non-hereditarily) conjugacy separable group $H$ will be a subgroup
of some right angled Artin group $A$.
\end{rem}

Indeed, by Theorem \ref{thm:Rips-h_c_s}, the group $G$ from the main construction
belongs to the class $\avr$. Hence there is a finite index subgroup $G_1 \le G$
such that $G_1 \in \vr$. Note that $G_1$ is hereditarily conjugacy separable, torsion-free and hyperbolic,
because all of these properties are inherited by finite index subgroups.
Denote $N_1:=N \cap G_1$,  $H_1:=H \cap G_1$, $P_1:=\psi(G_1)$,
$F_1:=P_1 \cap F \lhd P_1$, and let $Q_1$ be the image of $P_1$ in $Q$.
Clearly  $|P:P_1|<\infty$, therefore $P_1$ cannot be residually finite.
And since $G_1/N_1 \cong P_1$, $G_1/H_1 \cong Q_1 \le Q$, Theorem \ref{thm:sep<->conj_sep} yields that
$H_1$ is conjugacy separable and $N_1$ is not conjugacy separable. Finally, we see that
$|H_1:N_1| =|F_1| \le|F| <\infty$.

\begin{rem}\label{rem:solv_conj} The groups $N$ and $H$ from the main construction have solvable
conjugacy problem.
\end{rem}

This is an immediate consequence of Theorem \ref{thm:dec_w_p<->conj_probl}, because
$Q\cong G/H$ and $P \cong G/N$ both have solvable word problem. Indeed, $Q$ is finitely presented and
residually finite, hence the word problem in $Q$ is solvable by Mal'cev's result \cite{Malcev}.
Therefore, $P$, being an extension of the finite group $F \cong H/N$ by $Q$, has solvable word problem
as well.

\section{Conjugacy separable fibre products}\label{sec:fibre_cs}

Let $G$ be a group. To every normal subgroup $N \lhd G$ one can associate the \textit{fibre product}
subgroup $T_N \le G \times G$, defined by $T_N:=\{(g_1,g_2) \in G\times G \,|\, \psi(g_1)=\psi(g_2)\}$,
where $\psi: G \to G/N$ is the natural epimorphism. It is not difficult to see that $T_N$
is the product of $N \times N \lhd G \times G$ with the diagonal subgroup
$\{(g_1,g_1) \,|\, g_1\in G\} \le G\times G$.
The following remarkable
``$1$-$2$-$3$ Theorem'', discovered
by G. Baumslag, M. Bridson, C. Miller and H. Short in \cite{B-B-M-S},
provides sufficient conditions for finite presentability of the fibre product:

\begin{lemma} \label{lem:1-2-3}
Suppose that $N$ is a normal subgroup of a group $G$, $Q:=G/N$ and $T_N \le G \times G$
is the fibre product associated to $N$.
If $N$ is finitely generated, $G$ is finitely presented and $Q$ is of type $F_3$, then
$T_N$ is finitely presented.
\end{lemma}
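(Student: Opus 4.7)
The plan is to exhibit an explicit finite presentation of $T_N$, modelled on the semidirect-product decomposition $T_N \cong N \rtimes G$ coming from the projection $T_N \to G$ onto the second factor and the diagonal splitting. Pick finite generating sets $A$ of $G$ and $B=\{b_1,\dots,b_k\}$ of $N$; then $\hat A \cup \hat B$, with $\hat A=\{(a,a):a\in A\}$ and $\hat B=\{(b,1):b\in B\}$, generates $T_N$, because any $(g_1,g_2)\in T_N$ satisfies $g_1 g_2^{-1}\in N$ and so factors as $(g_1 g_2^{-1},1)(g_2,g_2)$. Using that $G$ is finitely presented, pick $G=\langle A\mid R\rangle$ with $R$ finite; using that $Q$ is finitely presented, enlarge $R$ by a finite $S\subset F(A)$ so that $Q=\langle A\mid R\cup S\rangle$, and enlarge $B$ so that the images of $S$ lie in $B$. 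For each $(a,b)\in A\times B$, fix a word $w_{a,b}\in F(B)$ representing $a^{-1}ba\in N$. The candidate presentation is
$$\bigl\langle\,\hat A,\hat B\ \bigm|\ R(\hat A),\ \hat a^{-1}\hat b\hat a = w_{a,b}(\hat B),\ R_N(\hat B)\,\bigr\rangle,$$
where $R_N\subseteq F(B)$ is a (possibly infinite) set of relators for $N=\langle B\mid R_N\rangle$. Everything apart from $R_N$ is finite, and a routine check shows this really presents $N\rtimes G\cong T_N$.

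The task is therefore to replace $R_N$ by a finite subset. The point is that the conjugation relations $\hat a^{-1}\hat b\hat a = w_{a,b}$ already encode the full $G$-action on $N$, so in the group defined by the other relations alone, any two $G$-conjugate relators of $N$ are equivalent. Hence a finite $R_N^0\subseteq R_N$ suffices provided its $G$-orbit normally generates $\ker(F(B)\to N)$ inside $F(B)$. Abelianising, this reduces to showing that the relation module $M:=\ker(F(B)\to N)^{\mathrm{ab}}$, viewed as a $\mathbb{Z}G$-module via conjugation, is finitely generated over $\mathbb{Z}G$. This is precisely where the $F_3$ hypothesis enters: by the standard chain-level characterisation of type $F_3$, for any finite presentation 2-complex $L$ of $Q$ the second homotopy module $\pi_2(L)$ is finitely generated over $\mathbb{Z}Q$. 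Taking $L$ to be the presentation complex of $Q=\langle A\mid R\cup S\rangle$, and comparing the cellular chain complex of the regular covering of the presentation complex of $G$ corresponding to $N\lhd G$ (it has fundamental group $N$ and deck group $Q$) with a free $\mathbb{Z}G$-resolution arising from the chosen presentation of $G$ and generating set $B$ of $N$, one identifies $M$ as a $\mathbb{Z}G$-module built from $\pi_2(L)$ together with a finite amount of bookkeeping data coming from $R$, $S$ and $B$. Finite generation of $M$ over $\mathbb{Z}G$ follows.

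The main technical obstacle is precisely this last comparison, together with the subsequent lift from the \emph{abelianised} relation module back to finitely many honest group-theoretic relators. Any commutator corrections that appear during the lift are themselves consequences of $R_N^0$ under $F(B)$-conjugation, so they cost nothing extra; but turning the topological/homological picture into a clean dictionary between chains in the relevant covering complex and words in the generators of $F(B)$ is the substance of the ``1-2-3 theorem'' of Baumslag, Bridson, Miller and Short \cite{B-B-M-S}, and is best carried out by the original authors' explicit cellular argument.
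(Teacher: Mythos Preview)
The paper does not give its own proof of this lemma: it is simply quoted as the ``$1$-$2$-$3$ Theorem'' of Baumslag, Bridson, Miller and Short, with a citation to \cite{B-B-M-S} and no further argument. So there is nothing in the paper to compare your proposal against.

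Your sketch is a fair outline of the strategy actually used in \cite{B-B-M-S}: the semidirect decomposition $T_N\cong N\rtimes G$, the observation that all relations except those of $N$ can be made finite, and the reduction of finite presentability to finite generation of the relation module of $N$ as a $\mathbb{Z}G$-module, with the $F_3$ hypothesis on $Q$ supplying exactly that via $\pi_2$ of a presentation $2$-complex. You yourself acknowledge in the final paragraph that the substantive step---matching the chain complex of the relevant covering with the relation module and then lifting from the abelianisation---is deferred to the original source. For the purposes of the present paper that is entirely appropriate: the lemma is a black box here, and the correct move is precisely to cite \cite{B-B-M-S} and move on, which is what the paper does.
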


(Recall that a group is said to be \textit{of type} $F_n$ if it has an Eilenberg-Maclane CW complex
with only finitely many $k$-cells for each $k \le n$.)

The above criterion together with the hereditarily conjugacy separable version of Rips's construction
can be used to produce finitely presented (non-hereditarily) conjugacy separable groups. However,
before doing this, we need to establish analogues of Propositions \ref{prop:sep->conj_sep} and
\ref{prop:conj_sep->sep_quot} for fibre products.


If $G$ is a group, $H \le G$ and $x \in G$, the $H$-\emph{conjugacy class} of $x$ in $G$ is,
by definition, the
set $x^H:=\{h^{-1}xh \, | \, h \in H\}$.

\begin{lemma}\label{lem:c_d_sbgps} Suppose $H$ is a finite index subgroup of a group $K$ and
$x \in H$. If $x^H$ is closed in $\PT(H)$ then $x^K$ is closed in $\PT(K)$.
\end{lemma}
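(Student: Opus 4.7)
The plan is to decompose $x^K$ into finitely many translates of $x^H$ and then argue that each translate is closed in $\PT(K)$.

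First, I would fix a system of right coset representatives $g_1,\dots,g_n$ for $H$ in $K$, so that $K = \bigsqcup_{i=1}^n H g_i$. A direct manipulation, using that every element of $K$ has the form $h g_i$ with $h \in H$, yields the identity
$x^K \;=\; \bigcup_{i=1}^n g_i^{-1}\, x^H\, g_i,$
reducing the task to showing that each $g_i^{-1}\, x^H\, g_i$ is closed in $\PT(K)$.

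The key step is to show $x^H$ itself is closed in $\PT(K)$; this is essentially the statement that $\PT(K)|_H = \PT(H)$ when $|K:H| < \infty$. Given $k_0 \in K \setminus x^H$, I would split into two cases. If $k_0 \notin H$, I would use that $H$ is open and closed in $\PT(K)$: its normal core $H_0 := \bigcap_{k\in K} k H k^{-1}$ is a finite-index normal subgroup of $K$ contained in $H$, so $H$ is a finite union of $H_0$-cosets, and one can separate $k_0$ from $H$ by a single $H_0$-coset. If $k_0 \in H$, the hypothesis that $x^H$ is closed in $\PT(H)$ provides a finite-index normal subgroup $M \lhd H$ with $k_0 M \cap x^H = \emptyset$; replacing $M$ by its normal core in $K$ gives a finite-index normal subgroup $N \lhd K$ with $N \le M$, and then $k_0 N \cap x^H = \emptyset$.

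Finally, for any $g \in K$ the conjugation map $c_g : k \mapsto g^{-1} k g$ permutes the cosets of every finite-index normal subgroup of $K$, hence is a homeomorphism of $(K, \PT(K))$. Consequently each $g_i^{-1}\, x^H\, g_i = c_{g_i}(x^H)$ is closed in $\PT(K)$, and the finite union $x^K$ is closed as well. The only delicate point is the middle step — the compatibility between $\PT(H)$ and $\PT(K)|_H$ — but it reduces cleanly to forming normal cores in $K$.
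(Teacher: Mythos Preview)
Your proof is correct and follows essentially the same route as the paper: decompose $x^K$ as a finite union of conjugates of $x^H$, show $x^H$ is closed in $\PT(K)$, and conclude since conjugation is a $\PT(K)$-homeomorphism. The paper's argument is terser, dispatching the middle step with the single observation that any finite-index subgroup of $H$ has finite index in $K$, whereas you spell out the case split and the passage to normal cores; but this is a difference of detail, not of strategy.
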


\begin{proof} Choose $g_1,\dots,g_k \in K$ so that $K=\bigsqcup_{i=1}^k Hg_i$.
Since $|K:H|<\infty$, any subset of $H$ which is closed in $\PT(H)$, is also closed in
$\PT(K)$ (because any subgroup of finite index in $H$ also has finite index in $K$).
Therefore, $x^H$ is closed in $\PT(K)$. Consequently, $x^K=\bigcup_{i=1}^k g_i^{-1} x^H g_i$ is closed
in $\PT(K)$ as a finite union of closed sets.
\end{proof}

\begin{lemma}\label{lem:dir_prod-hcs} Let $G_1$ and $G_2$ be hereditarily conjugacy separable groups. Then their direct product
$G:=G_1 \times G_2$ is also hereditarily conjugacy separable.
\end{lemma}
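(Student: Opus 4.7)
The plan is to take an arbitrary finite index subgroup $K \le G := G_1 \times G_2$ and show that $K$ is conjugacy separable. I would start by disposing of the (standard) fact that the direct product of two conjugacy separable groups is itself conjugacy separable: two elements $(a_1, a_2), (b_1, b_2) \in G_1 \times G_2$ are conjugate if and only if $a_i$ is conjugate to $b_i$ in $G_i$ for each $i$, so a finite quotient of $G_i$ separating $a_i$ from $b_i$ pulls back to a finite quotient of $G$ separating the pairs. Next, using hereditary conjugacy separability of each $G_i$, I would choose finite index normal subgroups $H_i \lhd G_i$ such that $H := H_1 \times H_2 \le K$; then $H \lhd G$, hence $H \lhd K$, and $H$ is conjugacy separable because each $H_i$ is (as a finite index subgroup of the HCS group $G_i$).

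To show $K$ is conjugacy separable, I need each class $x^K$ ($x \in K$) to be closed in $\PT(K)$. Since $H \lhd K$ has finite index, every $K$-conjugate of $x$ lies in the $H$-coset determined by its image in the finite group $K/H$, and $x^K$ decomposes as a finite union $\bigcup_{i=1}^r z_i^H$ of $H$-conjugation orbits, where the $z_i = k_i^{-1} x k_i \in K$ represent the $K/H$-conjugates of $xH$. Finite unions of closed sets being closed, it suffices to show that $z^H$ is closed in $\PT(K)$ for an arbitrary $z \in K$.

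This is the crucial step, where the direct product structure becomes indispensable, because the $z_i$ generally will not lie in $H$, so Lemma \ref{lem:c_d_sbgps} does not apply to $z^H$ directly. Writing $z = (z_1, z_2)$ with $z_i \in G_i$ and $H = H_1 \times H_2$, conjugation splits coordinate-wise and yields $z^H = z_1^{H_1} \times z_2^{H_2}$. It is therefore enough to show each $z_i^{H_i}$ is closed in $\PT(G_i)$. For this I would set $L_i := \langle H_i, z_i \rangle \le G_i$; since $H_i$ has finite index and is normal in $G_i$, $L_i$ has finite index in $G_i$, $H_i \lhd L_i$, and $L_i/H_i$ is cyclic, generated by the image of $z_i$. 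A short computation shows $z_i^{L_i} = z_i^{H_i}$: every element of $L_i$ has the form $z_i^n h$ with $h \in H_i$, and $(z_i^n h)^{-1} z_i (z_i^n h) = h^{-1} z_i h$. By HCS of $G_i$, the group $L_i$ is conjugacy separable, so $z_i^{L_i}$ is closed in $\PT(L_i)$; since $L_i$ has finite index in $G_i$, the profinite topology on $L_i$ agrees with the subspace topology induced from $\PT(G_i)$, and therefore $z_i^{H_i}$ is closed in $\PT(G_i)$.

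The main obstacle is exactly this twisted aspect: the decomposition of $x^K$ produces $H$-orbits of elements $z \in K$ possibly outside $H$, and a direct appeal to Lemma \ref{lem:c_d_sbgps} would require $z \in H$. Without the direct product structure one would apparently be pushed into the realm of twisted conjugacy separability, which is not available in this generality. The identity $z_i^{L_i} = z_i^{H_i}$ finesses this by replacing each $z_i^{H_i}$ with a genuine conjugacy class in the conjugacy separable group $L_i$. Assembling the pieces, $z^H = z_1^{H_1} \times z_2^{H_2}$ is closed in $\PT(G_1 \times G_2) = \PT(G_1) \times \PT(G_2)$, hence closed in $\PT(K)$ (which agrees with the subspace topology from $\PT(G)$ since $[G:K] < \infty$); thus $x^K$ is closed in $\PT(K)$ and $K$ is conjugacy separable.
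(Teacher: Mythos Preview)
Your proof is correct and follows essentially the same approach as the paper's: both arguments pass to a ``box'' subgroup $N_1\times N_2\le K$ and use the key identity $z_j^{\langle z_j\rangle N_j}=z_j^{N_j}$ (valid because the quotient is cyclic generated by $z_jN_j$) together with hereditary conjugacy separability of $G_j$ to conclude that the relevant twisted conjugacy classes are closed. The only difference is packaging: the paper sets $H:=\langle x\rangle N$ so that $x\in H$, proves $x^H$ is closed in $\PT(H)$, and then invokes Lemma~\ref{lem:c_d_sbgps} once, whereas you keep $H=H_1\times H_2$, decompose $x^K=\bigcup_i z_i^H$ explicitly (which is exactly what the proof of Lemma~\ref{lem:c_d_sbgps} does), and apply the cyclic trick to each $z_i$ in turn.
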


\begin{proof} Consider any finite index subgroup $K \le G$ and any $x=(x_1,x_2) \in K$.
Then there exist finite index normal subgroups $N_i \lhd G_i$, $i=1,2$, such
that $N:=N_1\times N_2 \le K$. Set $H:=\langle x \rangle N \le K$,
then $x \in H$, and $H$ has finite index in $K$ and in $G$.

Take any element $y=(y_1,y_2) \in H$ such that $y \notin x^H$. Then there is $j \in \{1,2\}$
such that $y_j \notin x_j^{N_j}$. Observe that $x_j^{N_j}=x_j^{H_j}$, where $H_j:=\langle x_j \rangle N_j$
is the image of $H$ under the canonical projection $\rho_j:G \to G_j$. Thus  $x_j,y_j \in H_j$
and $y_j \notin x_j^{H_j}$.  Now, since $|G_j:H_j|\le |G_j:N_j|<\infty$,
$H_j$ is conjugacy separable by the assumptions. Hence there is a finite
group $Q$ and a  homomorphism $\psi:H_j\to Q$
such that $\psi(y_j) \notin \psi(x_j)^Q$. Define the homomorphism $\varphi:H \to Q$ by
$\varphi:=\psi \circ \rho_j$. Evidently, $\varphi(y)=\psi(y_j) \notin  \psi(x_j)^Q=\varphi(x)^Q$.

Thus we have shown that $x^H$ is closed in $\PT(H)$. Therefore, by Lemma \ref{lem:c_d_sbgps},
$x^K$ is closed in $\PT(K)$ for every $x \in K$. And we can conclude that
$K$ is conjugacy separable, as required.
\end{proof}

The next statement is proved in \cite[Cor. 11.2]{Min-RAAG}.

\begin{lemma}\label{lem:h_c_s->c_s_for_sbgps} Let $G$ be a hereditarily conjugacy separable group.
Suppose that $H$ is a subgroup of $G$ such that the double coset $C_G(h)H$ is separable in $G$ for every
$h \in H$. Then $H$ is conjugacy separable.
\end{lemma}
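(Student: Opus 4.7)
The plan is to argue by contrapositive, starting from $x,y \in H$ with $y \notin x^H$ and constructing a homomorphism from $H$ to a finite group in which their images remain non-conjugate. I would split according to whether $x$ and $y$ are already non-conjugate in the ambient group $G$.

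The easy case is $y \notin x^G$. Since hereditary conjugacy separability of $G$ implies ordinary conjugacy separability, there is a homomorphism from $G$ to a finite group whose restriction to $H$ provides the required separation.

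In the remaining case, write $y = g^{-1}xg$ for some $g \in G$. A short centralizer computation shows that the set of conjugators $\{u \in G : u^{-1}xu = y\}$ is exactly the left coset $C_G(x)\,g$, and hence $y \in x^H$ if and only if $g \in C_G(x) H$. Since $y \notin x^H$, we have $g \notin C_G(x) H$. Now I would invoke the standing hypothesis with $h := x$: the double coset $C_G(x) H$ is separable in $G$, so there exists a finite-index subgroup $K \le G$ with $C_G(x) H \subseteq K$ and $g \notin K$. In particular $H \le K$ and $C_G(x) \le K$.

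The pivotal step, which I would verify next, is that $y \notin x^K$. Indeed, if some $k \in K$ satisfied $k^{-1}xk = y$, then $k \in C_G(x)\,g$, which forces $g \in C_G(x)\,k \subseteq C_G(x) K = K$, contradicting the choice of $K$. Since $K$ has finite index in the hereditarily conjugacy separable group $G$, it is itself conjugacy separable, so there is a homomorphism $\varphi\colon K \to Q$ to a finite group $Q$ with $\varphi(x)$ not conjugate to $\varphi(y)$ in $Q$. Restricting $\varphi$ to $H \le K$ completes the argument.

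The main subtlety -- and the reason the hypothesis is stated in terms of the enlarged set $C_G(x) H$ rather than just $H$ -- is exactly the pivotal step above: it is the containment $C_G(x) \le K$ that allows one to lift the failure of conjugacy from $H$ up to $K$, where hereditary conjugacy separability of $G$ can finally be applied. Separating $g$ merely from $H$ would leave the centralizer outside $K$, and one could no longer rule out a $K$-conjugator.
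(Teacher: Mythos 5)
Your overall strategy is the natural one (the paper itself does not prove this lemma but quotes it from Minasyan's right angled Artin group paper, and the argument there follows the same outline): split according to whether $x$ and $y$ are conjugate in $G$, use conjugacy separability of $G$ in the first case, and in the second case use the identity $\{u\in G: u^{-1}xu=y\}=C_G(x)g$ to translate $y\notin x^H$ into $g\notin C_G(x)H$, then pass to a finite index subgroup where $x$ and $y$ are still non-conjugate. The first case and the conjugator computation are both correct.

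The gap is in the sentence ``the double coset $C_G(x)H$ is separable in $G$, so there exists a finite-index subgroup $K\le G$ with $C_G(x)H\subseteq K$ and $g\notin K$.'' Separability of a subset $A\subseteq G$ means that $A$ is closed in $\PT(G)$, i.e.\ for each $g\notin A$ there is a finite index normal subgroup $M\lhd G$ with $g\notin AM$. When $A$ is a subgroup one may take $K=AM$, which is again a subgroup; but a double coset $C_G(x)H$ is in general not a subgroup, and neither is $C_G(x)HM$, so no finite index subgroup containing $C_G(x)H$ and omitting $g$ need exist. Indeed, if $C_G(x)$ and $H$ together generate $G$ --- a perfectly possible situation --- the only subgroup of $G$ containing $C_G(x)H$ is $G$ itself. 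Consequently your ``pivotal step,'' which leans on the containment $C_G(x)\le K$, is not available. The repair is short: choose a finite index normal subgroup $M\lhd G$ with $g\notin C_G(x)HM$ and set $K:=HM$. Then $H\le K$, $|G:K|<\infty$, and $g\notin C_G(x)K$; if some $k\in K$ satisfied $k^{-1}xk=y$, then $k\in C_G(x)g$, whence $g\in C_G(x)k\subseteq C_G(x)K$, a contradiction. So $y\notin x^K$, and hereditary conjugacy separability of $G$ finishes the argument exactly as you describe. The moral is that the hypothesis on $C_G(x)H$ does not buy you $C_G(x)\le K$; it buys you the disjointness of the conjugator coset $C_G(x)g$ from $K$, which is all the pivotal step actually needs.
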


A group $G$ is said to be \textit{cyclic subgroup separable} if for every $g \in G$
the cyclic subgroup $\langle g \rangle \le G$ is closed in $\PT(G)$.

\begin{prop}\label{prop:fibre_sep->conj_sep} Let $H$ be a normal subgroup
of a torsion-free hyperbolic group $G$. If $G$ is hereditarily conjugacy separable and
$G/H$ is cyclic subgroup separable, then the corresponding fibre product $T_H \le G \times G$
is conjugacy separable.
\end{prop}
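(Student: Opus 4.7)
The plan is to apply Lemma \ref{lem:h_c_s->c_s_for_sbgps} to the subgroup $T_H$ of the ambient group $G \times G$, which is hereditarily conjugacy separable by Lemma \ref{lem:dir_prod-hcs}. The task thus reduces to verifying that the double coset $C_{G \times G}(t)\, T_H$ is closed in $\PT(G \times G)$ for every $t = (t_1, t_2) \in T_H$.

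First I would unwind this double coset. Writing $\bar G := G/H$, letting $\psi: G \to \bar G$ be the quotient map, and using the factorization $T_H = (H \times H)\, \Delta(G)$, where $\Delta(G) = \{(g, g) : g \in G\}$, one finds
\[
C_{G \times G}(t)\, T_H = \bigl(C_G(t_1) H \times C_G(t_2) H\bigr)\, \Delta(G).
\]
Since $H \times H \subseteq T_H$ is the kernel of the continuous homomorphism $\eta := \psi \times \psi : G \times G \to \bar G \times \bar G$, this double coset is the full $\eta$-preimage of the set $\bigl(\psi(C_G(t_1)) \times \psi(C_G(t_2))\bigr)\, \Delta(\bar G)$. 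A short check identifies the latter set with $\mu^{-1}\bigl(\psi(C_G(t_1))\, \psi(C_G(t_2))^{-1}\bigr)$, where $\mu : \bar G \times \bar G \to \bar G$ sends $(x, y)$ to $xy^{-1}$. Both $\eta$ and $\mu$ are continuous for the profinite topologies, so the problem reduces to showing that $\psi(C_G(t_1))\, \psi(C_G(t_2))^{-1}$ is closed in $\PT(\bar G)$.

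The hard part will be this last step, since products of separable subgroups are not separable in general. The key is to exploit the constraint $t \in T_H$. If $t_i = 1$ for some $i$, the product equals $\bar G$ and there is nothing to prove. Otherwise, torsion-freeness and hyperbolicity of $G$ force $C_G(t_i) = \langle \gamma_i \rangle$ to be infinite cyclic with $t_i = \gamma_i^{m_i}$ for some nonzero $m_i$, so the product rewrites as $\langle \psi(\gamma_1) \rangle \langle \psi(\gamma_2) \rangle$, a union of left cosets of $\langle \psi(\gamma_2) \rangle$ in bijection with $\langle \psi(\gamma_1) \rangle / \bigl(\langle \psi(\gamma_1) \rangle \cap \langle \psi(\gamma_2) \rangle\bigr)$. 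The condition $\psi(t_1) = \psi(t_2)$ rewrites as $\psi(\gamma_1)^{m_1} = \psi(\gamma_2)^{m_2}$, placing $\langle \psi(\gamma_1)^{m_1} \rangle$ inside $\langle \psi(\gamma_1) \rangle \cap \langle \psi(\gamma_2) \rangle$ and thereby bounding the number of these cosets by $|m_1|$. Cyclic subgroup separability of $\bar G$ makes $\langle \psi(\gamma_2) \rangle$ closed in $\PT(\bar G)$; since left translations are homeomorphisms of the topological group $\bar G$, every such coset is closed. The product is therefore a finite union of closed sets, hence closed, which finishes the argument.
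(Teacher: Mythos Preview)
Your proof is correct and follows the same overall strategy as the paper: apply Lemma~\ref{lem:h_c_s->c_s_for_sbgps} inside $G\times G$ (hereditarily conjugacy separable by Lemma~\ref{lem:dir_prod-hcs}), push the double coset down to $\bar G\times\bar G$ via $\eta=\psi\times\psi$, and then to $\bar G$ via the ``difference'' map $\mu(x,y)=xy^{-1}$, where cyclic subgroup separability finishes the job.

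The one organisational difference is where the finiteness enters. The paper only uses that $C_G(x_i)$ is \emph{virtually} cyclic: it writes $C_{G\times G}(x)$ as a finite union of translates of $\langle x_1\rangle\times\langle x_2\rangle$, then absorbs the second factor into $T_H$ via the identity $\langle x_1\rangle\times\langle x_2\rangle=\langle(x_1,1)\rangle\,\langle(x_1,x_2)\rangle$ and $(x_1,x_2)\in T_H$, reducing directly to closedness of the single cyclic subgroup $\langle\psi(x_1)\rangle$ in $\bar G$. You instead exploit that in a torsion\nobreakdash-free hyperbolic group the centraliser of a nontrivial element is \emph{exactly} infinite cyclic, so no transversal is needed upstairs; the finite union appears only at the end, as the product $\langle\psi(\gamma_1)\rangle\langle\psi(\gamma_2)\rangle$ decomposes into at most $|m_1|$ cosets of $\langle\psi(\gamma_2)\rangle$ thanks to $\psi(\gamma_1)^{m_1}=\psi(\gamma_2)^{m_2}$. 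Both routes are valid; yours is slightly shorter because it uses the stronger structural fact about centralisers, while the paper's argument would go through verbatim assuming only virtual cyclicity of centralisers.
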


\begin{proof} Denote $Q:=G/H$, let
$\psi:G \to Q$ be the natural epimorphism, and let $\eta: G \times G \to Q \times Q$ be the homomorphism
defined by $\eta(g_1,g_2):=(\psi(g_1), \psi(g_2))$ for all $(g_1,g_2) \in G \times G$.
Consider any element $x=(x_1,x_2) \in T_H$. We will show that the double coset $C_{G \times G}(x)T_H$
is separable in $G\times G$, and then Lemma \ref{lem:h_c_s->c_s_for_sbgps} will allow us to conclude
that $T_H$ is conjugacy separable.

If $x_1=1$ (or $x_2=1$) in $G$, then
$(G,1) \le C_G(x)$ ($(1,G) \le C_G(x)$), and since $T_H$ contains the diagonal subgroup of $G \times G$,
we have $C_G(x)T_H=G \times G$. Thus, in this case the double coset $C_G(x) T_H$ is separable
in $G \times G$.

So, we can suppose that both $x_1$ and $x_2$ are infinite order elements in $G$. Then
$|C_G(x_i):\langle x_i \rangle|<\infty$ for $i=1,2$, and since
$C_{G \times G}(x)=C_G(x_1) \times C_G(x_2)$ in $G \times G$, we see that
$|C_{G \times G}(x): \langle x_1 \rangle \times \langle x_2 \rangle|<\infty$.
Consequently, there exist elements $z_1,\dots,z_n \in C_{G \times G}(x)$ such that
$C_{G \times G}(x)=\bigsqcup_{i=1}^n z_i \left( \langle x_1 \rangle \times \langle x_2 \rangle\right)$.

Note that
$\langle x_1 \rangle \times \langle x_2 \rangle=\langle (x_1,1) \rangle \langle(x_1,x_2)\rangle$
in $G \times G$. Therefore, since $x=(x_1,x_2) \in T_H$, we have
$\left(\langle x_1 \rangle \times \langle x_2 \rangle \right) T_H=\langle (x_1,1) \rangle T_H$.
It is easy to see that $\eta(T_H)=D$, where $D:=\{(q,q) \,|\, q \in Q\}$ is the diagonal
subgroup of $Q \times Q$, and $T_H=\eta^{-1}(D)$ is the full preimage of $D$
in $G \times G$. Hence $\langle (x_1,1) \rangle T_H=
\eta^{-1}\left(\langle a \rangle D\right)$, where $a:=\eta((x_1,1))=(a_1,1) \in Q \times Q$,
$a_1:=\psi(x_1)$.

Observe that $(q_1,q_2) \in \langle a \rangle D$ in $Q\times Q$ if and only if
$q_1q_2^{-1} \in \langle a_1 \rangle$ in $Q$. Therefore, if
$(q_1,q_2) \notin \langle a \rangle D$, we can use the assumption that $Q$ is cyclic subgroup
separable to find a finite index normal subgroup $M \lhd Q$ with
$q_1q_2^{-1} \notin \langle a_1 \rangle M$. Hence
$(q_1,q_2) \notin \langle a \rangle D (M \times M)$ in $Q \times Q$, and
$M \times M$ has finite index in $Q \times Q$. Thus we have shown that the double coset
$\langle a \rangle D$ is closed in $\PT(Q\times Q)$. Since $\eta:G \times G \to Q \times Q$
is a continuous map (with respect to the corresponding profinite topologies), we can conclude that
$\langle (x_1,1) \rangle T_H=\eta^{-1}(\langle a \rangle D)$ is closed in $\PT(G \times G)$.

In $G \times G$ we have
$C_{G \times G}(x) T_H=
\bigcup_{i=1}^n z_i \left( \langle x_1 \rangle \times \langle x_2 \rangle\right)T_H=
\bigcup_{i=1}^n z_i \bigl( \langle (x_1,1) \rangle T_H\bigr)$. Which implies that
$C_{G \times G}(x) T_H$ is closed in $\PT(G \times G)$ as a finite union of closed sets.

Finally, since $G \times G$ is hereditarily conjugacy separable
by Lemma \ref{lem:dir_prod-hcs}, we can apply Lemma \ref{lem:h_c_s->c_s_for_sbgps}
to conclude that $T_H$ is conjugacy separable.
\end{proof}

We now turn to an analogue of Proposition \ref{prop:conj_sep->sep_quot}.

\begin{prop}\label{prop:fibre_conj_sep->rf_quot} Let $H$ be a finitely generated non-elementary
normal subgroup of a hyperbolic group $G$, with $E_G(H)=\{1\}$. If the fibre product $T_H \le G \times G$,
associated to $H$, is conjugacy separable, then $G/H$ is residually finite.
\end{prop}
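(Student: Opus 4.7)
The plan is to reduce this proposition to the already-proven Proposition~\ref{prop:conj_sep->sep_quot} by showing that the conjugacy separability of $T_H$ forces $H$ itself to be conjugacy separable. Once that reduction is in place, the hypotheses of Proposition~\ref{prop:conj_sep->sep_quot} are precisely the assumptions of the present proposition, and its conclusion is exactly what is wanted.

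The key technical step is the following conjugacy lemma inside $T_H$: for all $h, h' \in H$, the elements $(h, h)$ and $(h, h')$ of $T_H$ are conjugate in $T_H$ if and only if $h$ and $h'$ are conjugate in $H$. The backward direction is immediate, since for any $k \in H$ the element $(1, k)$ lies in $T_H$ and conjugates $(h, h)$ to $(h, k^{-1} h k)$. For the forward direction, I would choose $(a, b) \in T_H$ realising the conjugacy; the first-coordinate equation forces $a \in C_G(h)$, and the relation $\psi(a) = \psi(b)$ lets one write $b = a k$ with $k \in H$. The second-coordinate equation then collapses, using $a \in C_G(h)$, to $k^{-1} h k = h'$, which exhibits $h \sim_H h'$.

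With this lemma in hand, I would deduce conjugacy separability of $H$ as follows. Suppose $h_1, h_2 \in H$ are not conjugate in $H$. By the lemma, $x := (h_1, h_1)$ and $y := (h_1, h_2)$ are not conjugate in $T_H$, so the hypothesis produces a finite-index normal subgroup $M \lhd T_H$ with $x \not\sim y$ in $T_H/M$. Setting $M' := M \cap (\{1\} \times H)$, which is normal and of finite index in $\{1\} \times H \cong H$, the direct calculation
$$(1,h)^{-1} \, x \, (1,h) \cdot y^{-1} \; = \; (1,\, h^{-1} h_1 h \, h_2^{-1})$$
shows that if some $h \in H$ were to conjugate $h_1$ to $h_2$ modulo $M'$, then $(1,h)$ would conjugate $x$ to $y$ modulo $M$, contradicting the choice of $M$. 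Hence $h_1$ and $h_2$ remain non-conjugate in the finite quotient $H/M'$, and $H$ is conjugacy separable.

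Finally, $H$ is a finitely generated non-elementary normal subgroup of the hyperbolic group $G$ with $E_G(H) = \{1\}$ and, by the above, conjugacy separable, so Proposition~\ref{prop:conj_sep->sep_quot} applies verbatim and yields that $G/H$ is residually finite. The only substantive content of the argument is the conjugacy lemma in $T_H$ — in particular, the right choice of the pair $(h_1,h_1), (h_1,h_2)$ that encodes $H$-conjugacy rather than merely $G$-conjugacy; everything else, including the appeal to Proposition~\ref{prop:conj_sep->sep_quot}, is formal and needs no further hyperbolic-group input.
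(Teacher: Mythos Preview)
Your argument is correct and takes a genuinely different route from the paper's. The paper does not pass through conjugacy separability of $H$; instead it fixes a single element $h_1\in H$ with $C_G(h_1)=\langle h_1\rangle\le H$ (using the hypothesis $E_G(H)=\{1\}$), and for an arbitrary $g_1\in G\setminus H$ shows directly that $(h_1,h_1)$ and $(g_1h_1g_1^{-1},h_1)$ are non-conjugate in $T_H$. From the resulting finite quotient it manufactures a homomorphism $G/H\to Out(M)$ for a suitable finite $M$ in which $g_1H$ survives, so $G/H$ is residually finite without ever invoking Proposition~\ref{prop:conj_sep->sep_quot}.

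Your approach instead isolates the clean, hyperbolicity-free lemma ``$(h,h)\sim_{T_H}(h,h')$ iff $h\sim_H h'$'' and uses it to prove that $T_H$ conjugacy separable implies $H$ conjugacy separable; all hyperbolic content is then outsourced to Proposition~\ref{prop:conj_sep->sep_quot} (and hence to Grossman's criterion and Lemma~\ref{lem:Min-Os}). This is more modular and yields the independently interesting implication $T_H$ c.s.\ $\Rightarrow$ $H$ c.s., valid for any group $G$ and normal $H$. The paper's argument, by contrast, is self-contained here and in effect re-derives a fragment of the Grossman-type mechanism inside the proof. One small remark: your closing sentence that the appeal to Proposition~\ref{prop:conj_sep->sep_quot} ``needs no further hyperbolic-group input'' is true only in the sense that you are citing an already-proved result; the hyperbolic input is of course hidden inside that proposition.
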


\begin{proof} Since $H$ is non-elementary and $E_G(H)=\{1\}$,
there is an infinite order element $h_1 \in H$ such that $E_G(h_1)=\langle h_1 \rangle$
(see \cite[Lemma 3.4]{Olsh-G-sbgps}).
In particular, $C_G(h_1)=\langle h_1 \rangle \le H$ in $G$.

Consider any element $g_1 \in G \setminus H$, and set $g:=(g_1,1) \in G \times G$,
$h:=(h_1,h_1) \in G \times G$. Note that $g \notin T_H$ (as $g_1 \notin H$), therefore the elements
$h,f:=h^g \in H \times H \le T_H$
are not conjugate in $T_H$ because $C_{G\times G}(h)=\langle h_1 \rangle \times \langle h_1 \rangle \le T_H$.
The group $T_H$ is conjugacy separable by the assumptions, hence there exist a finite group
$F$ and a homomorphism $\zeta:T_H \to F$ such that $\zeta(f) \notin \zeta(h)^F$. Denote by
$\varphi:H \times H \to F$ the restriction of $\zeta$ to $H \times H$;
then $\varphi(f) \notin \varphi(h)^F$.

Since $H \times H$ is finitely generated, we can find a finite index subgroup
$N$ of $H \times H$ such that $N \le \ker(\varphi)$ and $N \lhd G \times G$. Define
$M:=(H \times H) /N$, and let $\alpha:H \times H \to M$ be the natural epimorphism.
Observe that, by definition, $\varphi$ factors through $\alpha$, hence
\begin{equation}
\label{eq:alpha_f-h}
\alpha(f) \notin \alpha(h)^M \mbox{  in $M$}.
\end{equation}

Define the map $\xi: G \to Aut(M)$ by $\xi(x_1)(y):=\alpha((x_1 y_1 x_1^{-1},y_2))$ for all $x_1 \in G$
and all $y \in M$, where $(y_1,y_2) \in H \times H$ is any element satisfying
$y=\alpha((y_1,y_2))$. Note that $\xi$ is a well-defined homomorphism because
$(x_1 y_1 x_1^{-1},y_2)=(x_1,1)(y_1,y_2) (x_1,1)^{-1}$ and $(x_1,1) N (x_1,1)^{-1}=N$.
Evidently $\xi(H) \le Inn(M)$,
therefore $\xi$ canonically gives rise to a homomorphism $\bar \xi: G/H \to Out(M)$,
where $Out(M):=Aut(M)/Inn(M)$ is the group of outer automorphisms of $M$.

Finally, we have $\xi(g_1)(\alpha(h))=\alpha(f)$, which together with
\eqref{eq:alpha_f-h} implies that $\xi(g_1) \notin Inn(M)$. Thus $\bar \xi(g_1H) \neq 1$ in
the finite group $Out(M)$. Since we started with an arbitrary element $g_1 \in G \setminus H$,
we can conclude that $G/H$ is residually finite.
\end{proof}

\begin{rem} A careful reader might note here that Propositions \ref{prop:sep->conj_sep} and
\ref{prop:conj_sep->sep_quot} could be proved similarly to Propositions \ref{prop:fibre_sep->conj_sep}
and \ref{prop:fibre_conj_sep->rf_quot}. However, we decided to present somewhat
different proofs for the former two statements, to make the argument in
Proposition  \ref{prop:sep->conj_sep} self-contained (independent of Lemma \ref{lem:h_c_s->c_s_for_sbgps}),
and the argument in Proposition \ref{prop:conj_sep->sep_quot} more conceptual and less technical.
\end{rem}

\begin{cor}\label{cor:non_rf_quot->non_cs}
Suppose that $G$ is a torsion-free residually finite hyperbolic group,
$H \lhd G$ is a finitely generated normal subgroup, and $T_H \le G \times G$ is the associated fibre product.
If $G/H$ is not residually finite, then $T_H$ is not conjugacy separable.
\end{cor}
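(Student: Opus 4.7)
The plan is to deduce the corollary as the contrapositive of Proposition \ref{prop:fibre_conj_sep->rf_quot}. That proposition requires three conditions on the normal subgroup $H \lhd G$: finite generation (given), $E_G(H)=\{1\}$, and non-elementarity. The condition $E_G(H)=\{1\}$ is immediate from the hypothesis that $G$ is torsion-free, since $E_G(H)$ is, by definition, a finite subgroup of $G$.

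It therefore remains only to argue that $H$ must be non-elementary. Suppose, for contradiction, that $H$ is elementary. Being torsion-free as a subgroup of $G$, $H$ is then either trivial or infinite cyclic. If $H=\{1\}$, then $G/H\cong G$, which is residually finite by hypothesis on $G$, contradicting the assumption that $G/H$ is not residually finite. If instead $H=\langle h\rangle$ is infinite cyclic and normal in $G$, then the conjugation action of $G$ on $H$ gives a homomorphism $G\to \mathrm{Aut}(H)\cong \Z/2\Z$ whose kernel is $C_G(h)$; in particular $|G:C_G(h)|\le 2$. On the other hand $C_G(h)\subseteq E_G(h)$, and in a torsion-free hyperbolic group $E_G(h)$ is infinite cyclic, so $C_G(h)$ is infinite cyclic as well. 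Thus $G$ itself is virtually cyclic and, being torsion-free, infinite cyclic, which forces $G/H$ to be a finite cyclic group and hence residually finite, a contradiction.

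With both hypotheses of Proposition \ref{prop:fibre_conj_sep->rf_quot} verified, its contrapositive yields that $T_H$ is not conjugacy separable, as required. The only even mildly delicate point in the argument is ruling out the infinite cyclic case for $H$, and this reduces to the standard fact that in a torsion-free hyperbolic group the maximal elementary subgroup containing an infinite order element is infinite cyclic.
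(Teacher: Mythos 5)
Your proof is correct and follows the same overall skeleton as the paper's: both deduce the corollary from Proposition \ref{prop:fibre_conj_sep->rf_quot} by checking that $E_G(H)=\{1\}$ (immediate from torsion-freeness of $G$) and that $H$ is non-elementary. The only divergence is in how the elementary case is excluded. The paper recycles the argument from the proof of Theorem \ref{thm:sep<->conj_sep}: a virtually cyclic subgroup of a hyperbolic group is quasiconvex, and a quasiconvex normal subgroup is either finite or of finite index (citing \cite{Mihalik}), so in either case $G/H$ would be residually finite, contradicting the hypothesis. You instead exploit torsion-freeness directly: a torsion-free elementary group is trivial or infinite cyclic; the trivial case contradicts residual finiteness of $G$, and in the infinite cyclic case the map $G \to \mathrm{Aut}(H) \cong \Z/2\Z$ with kernel $C_G(h)$, combined with the fact that $C_G(h)$ is torsion-free virtually cyclic and hence infinite cyclic, forces $G$ itself to be infinite cyclic, so that $G/H$ is finite and therefore residually finite. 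Your route avoids the quasiconvexity machinery and the finite-or-finite-index dichotomy for quasiconvex normal subgroups, at the price of twice invoking the standard (but not entirely trivial --- it ultimately rests on the classification of torsion-free two-ended groups) fact that a torsion-free virtually cyclic group is trivial or infinite cyclic, which is also the substance of your claim that $E_G(h)$ is infinite cyclic. Both arguments are complete; the paper's has the minor economy of reusing verbatim a step already established earlier in the text.
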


\begin{proof} As we have already shown in the proof of Theorem \ref{thm:sep<->conj_sep},
if $H$ were elementary, then $G/H$ would be residually finite. Therefore $H$ is non-elementary.
And since $G$ is torsion-free, it does not contain any non-trivial finite subgroups. Hence
$E_G(H)=\{1\}$, and the claim follows from Proposition \ref{prop:fibre_conj_sep->rf_quot}.
\end{proof}

We finish this section with two remarks concerning the conjugacy problem in fibre products.
Let $G$ be a finitely generated group.
We will say that the \textit{membership problem to cyclic subgroups} (MPCS)
\textit{is uniformly decidable in} $G$, if there is an algorithm,
which takes on input any two elements $x,y \in G$
and determines whether or not $y \in \langle x \rangle$ in $G$. Clearly, the latter
property is a natural ``algorithmic'' analogue of cyclic subgroup separability
(indeed, for any finitely presented cyclic subgroup separable group $G$, MPCS
will be uniformly decidable by Mal'cev's result \cite{Malcev}).

We can now formulate the corresponding counterparts of Propositions \ref{prop:memb->conj}
and \ref{prop:conj->memb}.

\begin{prop} \label{prop:fibre_memb->conj} Let $H$ be a normal subgroup
of a torsion-free hyperbolic group $G$. If the quotient
$G/H$ has uniformly decidable MPCS, then the corresponding fibre product $T_H \le G \times G$
has solvable conjugacy problem.
\end{prop}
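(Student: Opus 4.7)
The plan is to adapt the proof of Proposition \ref{prop:memb->conj} to the fibre product setting, reducing conjugacy in $T_H$ to a finite sequence of MPCS queries in $Q := G/H$. Given $x = (x_1, x_2), y = (y_1, y_2) \in T_H$, I would first test conjugacy in $G \times G$ by checking $x_i \sim_G y_i$ for $i = 1, 2$; this is decidable since $G$ is hyperbolic and so has solvable conjugacy problem. If $x$ and $y$ are not conjugate in $G \times G$, then they are not conjugate in $T_H$ either. Otherwise, by enumerating elements of $G$, find conjugators $a_1, a_2 \in G$ satisfying $a_i^{-1} x_i a_i = y_i$ for $i = 1, 2$.

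If $x_1 = 1$ or $x_2 = 1$, the problem reduces directly to conjugacy in $G$, so assume $x_1, x_2 \ne 1$. Since $G$ is torsion-free hyperbolic, each centralizer $C_G(x_i)$ is infinite cyclic; its generator $z_i$ (the primitive root of $x_i$), together with the integer $k_i$ such that $x_i = z_i^{k_i}$, can be computed effectively using \cite[Prop.~4.11]{B-M-V} and enumeration in $G$. The set of conjugators of $x$ to $y$ in $G \times G$ is then parametrized as $\{(z_1^{m_1} a_1, z_2^{m_2} a_2) : m_1, m_2 \in \mathbb{Z}\}$, and $x$ is conjugate to $y$ in $T_H$ if and only if some such conjugator lies in $T_H$; i.e., there exist $m_1, m_2 \in \mathbb{Z}$ with $\bar{z}_1^{\,m_1} w \in \langle \bar{z}_2 \rangle$ in $Q$, where $\bar{z}_i := \psi(z_i)$ and $w := \psi(a_1 a_2^{-1})$.

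The key observation -- and the step I expect to be the main obstacle -- is that the assumption $(x_1, x_2) \in T_H$ forces $\bar{z}_1^{\,k_1} = \bar{z}_2^{\,k_2}$ in $Q$, so $k_1$ belongs to the subgroup $B := \{m \in \mathbb{Z} : \bar{z}_1^{\,m} \in \langle \bar{z}_2 \rangle\}$ of $\mathbb{Z}$. Consequently $B = n\mathbb{Z}$ for some positive $n$ dividing $|k_1|$, and this $n$ can be computed by enumerating $m = 1, 2, \ldots, |k_1|$ and using MPCS in $Q$ to test $\bar{z}_1^{\,m} \in \langle \bar{z}_2 \rangle$. A short calculation shows that the set of $m_1 \in \mathbb{Z}$ with $\bar{z}_1^{\,m_1} w \in \langle \bar{z}_2 \rangle$ is either empty or a coset of $B$; so the existence of such an $m_1$ reduces to finitely many MPCS queries, namely testing $\bar{z}_1^{\,m_1} w \in \langle \bar{z}_2 \rangle$ for $m_1 = 0, 1, \ldots, n-1$. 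If at least one of these queries returns YES, then $x$ is conjugate to $y$ in $T_H$; otherwise it is not.
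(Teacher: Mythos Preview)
Your argument is correct and follows precisely the route the paper suggests: the paper's own ``proof'' is merely the remark that the result is an exercise combining the ideas of Propositions~\ref{prop:memb->conj} and~\ref{prop:fibre_sep->conj_sep}, and that is exactly what you have carried out. Your reduction of the conjugacy test to finitely many MPCS queries in $Q$---via the coset-of-$B$ observation, with $B=n\Z$ computable because $k_1\in B$---is a clean algorithmic translation of the separability argument for $\langle(x_1,1)\rangle T_H$ in the proof of Proposition~\ref{prop:fibre_sep->conj_sep}.
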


\begin{proof} We leave this as an exercise for the reader. It can be easily derived from the
proofs of Proposition \ref{prop:memb->conj} and Proposition \ref{prop:fibre_sep->conj_sep}.
\end{proof}

The proof of the next statement can be extracted from \cite[Thm. ${\rm A}'$, Lemma 3.3]{B-B-M-S}.

\begin{prop}\label{prop:fibre_conj->wp_for_quot}
Suppose that $G$ is a torsion-free hyperbolic group,
$H \lhd G$ is a finitely generated normal subgroup, and $T_H \le G \times G$ is the associated fibre product.
If $T_H$ has solvable  conjugacy problem then $G/H$ has solvable word problem.
\end{prop}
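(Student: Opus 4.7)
The plan is to reduce the word problem in $Q := G/H$ to the conjugacy problem in $T_H$. Given $g \in G$, one wants to decide whether $g$ lies in $H$. The key idea is to choose once and for all an element $h_0 \in H$ of infinite order with $C_G(h_0) = \langle h_0 \rangle$, and then to show that $g \in H$ if and only if the two elements $u := (h_0, h_0)$ and $v_g := (g h_0 g^{-1}, h_0)$ of $T_H$ are conjugate in $T_H$. Given an algorithm deciding conjugacy in $T_H$, this yields an algorithm deciding membership in $H$, hence the word problem in $Q$.

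First I would dispense with the elementary cases. Since $G$ is torsion-free, a finite $H$ is trivial, so $Q = G$ has solvable word problem. If $H$ is infinite elementary, then $H$ is infinite cyclic, say $H = \langle h_0\rangle$, and for every $g \in G$ one has $g h_0 g^{-1} = h_0^{\pm 1}$; hence $C_G(h_0)$ has index at most two in $G$, and since $C_G(h_0)$ is virtually cyclic in the hyperbolic group $G$, the group $G$ itself is virtually cyclic, hence infinite cyclic (being torsion-free), and $Q$ is finite. In the remaining case $H$ is non-elementary, and since $E_G(H) \le E_G(G) = \{1\}$ by torsion-freeness of $G$, the result \cite[Lemma 3.4]{Olsh-G-sbgps} (already used in the proof of Proposition \ref{prop:fibre_conj_sep->rf_quot}) provides an infinite-order $h_0 \in H$ with $E_G(h_0) = \langle h_0 \rangle$, hence $C_G(h_0) = \langle h_0 \rangle$. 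I fix such an $h_0$, together with explicit expressions for it in fixed finite generating sets of $G$ and of $H$; this is a one-time setup built into the algorithm.

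The group $T_H$ is finitely generated, for instance by the set $S := \{(h_j, 1)\}_j \cup \{(a_i, a_i)\}_i$, where $\{a_i\}$ and $\{h_j\}$ are the chosen finite generating sets of $G$ and $H$. Both $u$ and $v_g$ lie in $T_H$ by normality of $H$. The element $u = (h_0, h_0)$ is a fixed word in the diagonal generators $(a_i, a_i)$, and the identity
$$v_g \;=\; (g, g)\,(h_0, 1)\,(g^{-1}, g^{-1})\,(h_0^{-1}, 1)\,(h_0, h_0)$$
assembles $v_g$ as a word in $S$ using the given word for $g$ in the $a_i$ and the fixed word for $h_0$ in both the $h_j$ and the $a_i$.

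It remains to verify the crucial equivalence: $v_g$ is conjugate to $u$ in $T_H$ if and only if $g \in H$. If $g \in H$ then $(g, 1) \in T_H$ and conjugation by it sends $u$ to $v_g$. Conversely, suppose $(a, b) \in T_H$ satisfies $(a, b)\, u\, (a, b)^{-1} = v_g$; comparing coordinates gives $b h_0 b^{-1} = h_0$ and $a h_0 a^{-1} = g h_0 g^{-1}$. The first forces $b \in C_G(h_0) = \langle h_0 \rangle \le H$; the second forces $g^{-1} a \in C_G(h_0) \le H$, so $a \in gH$. Since $(a, b) \in T_H$ and $b \in H$, one has $\psi(a) = \psi(b) = 1$, whence $a \in H$ and therefore $g \in H$. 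The main (minor) obstacle is the reduction to the non-elementary case and the bookkeeping needed to express $u$ and $v_g$ as explicit words in generators of $T_H$; once $h_0$ is fixed, the rest is a direct appeal to the assumed conjugacy algorithm for $T_H$.
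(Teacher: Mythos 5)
Your proof is correct and follows essentially the route the paper intends: the paper leaves this as an exercise to be extracted from the proofs of Propositions \ref{prop:conj->memb} and \ref{prop:fibre_conj_sep->rf_quot}, and your argument is precisely that extraction --- choosing $h_0 \in H$ with $C_G(h_0)=\langle h_0\rangle$ via \cite[Lemma 3.4]{Olsh-G-sbgps} and testing conjugacy of $(h_0,h_0)$ with its conjugate by $(g,1)$ in $T_H$, exactly as in Proposition \ref{prop:fibre_conj_sep->rf_quot}. The reduction of membership in $H$ to this conjugacy test, the handling of the elementary cases, and the bookkeeping with the generating set $\{(h_j,1)\}\cup\{(a_i,a_i)\}$ of $T_H$ are all sound.
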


\begin{proof} Again this is an exercise in view of the proofs of Propositions \ref{prop:conj->memb}
and \ref{prop:fibre_conj_sep->rf_quot}.
\end{proof}

\section{Finitely presented examples}\label{sec:f_p_ex}
In this section we construct examples of finitely presented (non-hereditarily) conjugacy separable groups.
However, before proceeding we need one more auxiliary statement.

\begin{lemma}\label{lem:fibre_f_i} Suppose $N$ and $H$ are normal subgroups of a group $G$
such that $N \le H$ and $|H:N|< \infty$. Let $T_N, T_H \le G \times G$ be the corresponding
fibre products. Then $T_N \le T_H$ and $|T_H:T_N|=|H:N|$.
\end{lemma}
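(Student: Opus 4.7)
The containment $T_N \le T_H$ is immediate from the definition of the fibre product, since $N \le H$ forces $g_1^{-1} g_2 \in N \Rightarrow g_1^{-1} g_2 \in H$. For the index formula, my plan is to construct an explicit bijection between $H/N$ and the set of left cosets of $T_N$ in $T_H$.

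The key structural observation I will exploit is that the diagonal subgroup $\Delta := \{(g,g) \mid g \in G\}$ is contained in $T_N$, since $g^{-1} g = 1 \in N$. Given any $(g_1, g_2) \in T_H$, I note that $g_1^{-1} g_2 \in H$, and by normality of $H$ in $G$ also $h := g_1 g_2^{-1} \in H$ (indeed, $g_2 g_1^{-1} = g_1(g_1^{-1} g_2)g_1^{-1} \in H$, and $g_1 g_2^{-1}$ is its inverse). Then the factorization $(g_1, g_2) = (h, 1)(g_2, g_2)$, combined with $(g_2, g_2) \in \Delta \le T_N$, shows that every left coset of $T_N$ in $T_H$ is of the form $(h, 1) T_N$ for some $h \in H$, and of course $(h,1) \in T_H$ since $h^{-1} \in H$.

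To finish, I would check when two such cosets coincide: $(h_1, 1) T_N = (h_2, 1) T_N$ iff $(h_1^{-1} h_2, 1) \in T_N$, iff $h_2^{-1} h_1 \in N$, iff $h_1 N = h_2 N$ (using normality of $N$ to identify its left and right cosets in $H$). Hence the assignment $h \mapsto (h, 1) T_N$ descends to a well-defined bijection $H/N \to T_H/T_N$, yielding $|T_H : T_N| = |H : N|$. I do not foresee any significant obstacle; the argument is essentially careful bookkeeping of coset representatives, the main points to keep straight being the inclusion $\Delta \le T_N$ and the repeated use of normality of both $H$ and $N$ to move freely between expressions like $g_1^{-1} g_2$ and $g_1 g_2^{-1}$ and between left and right cosets.
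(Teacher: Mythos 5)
Your proof is correct and follows essentially the same route as the paper's: both rest on decomposing $T_H$ into the cosets $(h,1)T_N$ with $h$ ranging over representatives of $H/N$. The only cosmetic difference is that the paper first pushes everything down to $(G/N)\times(G/N)$ via the quotient $\eta$ and phrases the count as a semidirect product decomposition $\eta(T_H)=\eta((H,1))\ltimes D$ before pulling back along $\ker(\eta)=N\times N\le T_N$, whereas you count cosets directly in $G\times G$; both arguments are sound.
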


\begin{proof} The inclusion of $T_N$ in $T_H$ is an immediate consequence of the
definition of a fibre product. Set $P:=G/N$ and let $\eta:G\times G\to P\times P$
be the natural epimorphism with $\ker(\eta)=N \times N$.
Then $\eta(T_N)$ is the diagonal subgroup $D$ of $P \times P$, and
$\eta(T_H)=\eta(H \times H)D=\eta((H,1))D$, where $(H,1):=\{(h_1,1) \,|\,h_1 \in H\} \lhd G \times G$.

And since $\eta((H,1)) \cap D=\{1\}$ in $P \times P$, $\eta(T_H)$ is a semidirect product
of $\eta((H,1))$ and $D$. Therefore  $|\eta(T_H):\eta(T_N)|=|\eta(T_H):D|=|\eta((H,1))|=|H:N|$.
Recall that $\ker(\eta)=N \times N \le T_N \le  T_H$, hence $T_N$ and $T_H$ are the full $\eta$-preimages of
$\eta(T_N)$ and $\eta(T_H)$ in $G \times G$ respectively. Thus we can conclude that
$|T_H:T_N|=|\eta(T_H):\eta(T_N)|=|H:N|$.
\end{proof}

To produce the example, establishing Theorem \ref{thm:fp_ex},
we need to start with a short exact sequence of groups
\begin{equation}
\label{eq:short_exact-2} \{1\} \to F \to P \to Q \to \{1\},
\end{equation}
such that
\begin{itemize}
\item[(i)] $|F|=m<\infty$;
\item[(ii)] $Q$ is of type $F_3$;
\item[(iii)] $Q$ is cyclic subgroup separable;
\item[(iv)] $P$ is not residually finite.
\end{itemize}

Observe that the sequences given in
Examples \ref{ex:1} and \ref{ex:2} possess all these properties. Indeed, the groups $Q$,
arising there, are finite index subgroups of ${\rm Sp}(2n,\Z)$ or ${\rm SL}(3,\Z)$.
By a theorem A. Borel and J.-P. Serre \cite{Bor-Serre} such $Q$ is of type $F_n$ for every $n \in \N$.
On the other hand, $Q$ is cyclic subgroup separable because $Q \le {\rm GL}(m,\Z)$ for some $m \in \N$,
and ${\rm GL}(m,\Z)$ is cyclic subgroup separable (see \cite[Thm. 5, p. 61]{Segal}).

Now we can use Theorem \ref{thm:Rips-h_c_s} to find a hereditarily conjugacy separable torsion-free
hyperbolic group $G$ and a finitely generated normal subgroup $N\lhd G$ such that $G/N \cong P$. Let
$\psi:G \to P$ be the natural epimorphism with $\ker(\psi)=N$. Set $H:=\psi^{-1}(F) \lhd G$; then we have
the same commutative diagram \eqref{eq:cd} as before.

Let $T_N, T_H \le G \times G$ be the fibre
products associated to $N$ and $H$ respectively.  Note that $|H:N|=|F|<\infty$, hence $H$ is also finitely
generated. The group $G$ is finitely presented, as any hyperbolic group, and $G/H \cong Q$ is of type
$F_3$. Therefore Lemma \ref{lem:1-2-3} allows us to conclude that the group $T_H$ is finitely presented.

Finally, $T_H$ is conjugacy separable by Proposition \ref{prop:fibre_sep->conj_sep}, and $T_N$ is
not conjugacy separable by Corollary \ref{cor:non_rf_quot->non_cs}. And, according to Lemma \ref{lem:fibre_f_i},
$T_N \le T_H$ and $|T_H:T_N| =|H:N|=|F|=m$. Thus the group $T_H$ is a finitely presented
(non-hereditarily) conjugacy separable group.

To achieve the first additional claim of Theorem  \ref{thm:fp_ex},
first assume that $m$ is a prime number. Apply the above algorithm to find
the groups $F$, $P$, $Q$, $G$, $N$ and $H$ as before.
According to Theorem \ref{thm:Rips-h_c_s}, there is a finite index subgroup
$G_1 \le G$ with $G_1 \in \mathcal{VR}$. Observe that $P_1:=\psi(G_1)$ has finite index in $P$,
hence it cannot be residually finite, implying that $P_1 \cap F \neq \{1\}$. But since
$|F|=m$ is a prime, we can conclude that $F \le P_1$. Consequently, after setting
$N_1:=N \cap G_1$ and $H_1:=H \cap G_1$, we see that $N_1,H_1 \lhd G_1$ and $|H_1:N_1|=|F|=m$.

Since the class of right angled Artin groups is closed
under taking direct products, the group $G_1 \times G_1$ will be a subgroup of some right angled
Artin group. Let $T_{N_1},T_{H_1} \le G_1 \times G_1$
be the fibre products associated to $N_1$ and $H_1$ respectively.
As we showed above, $T_{H_1}$ is finitely presented and conjugacy separable,
$T_{N_1}$ is not conjugacy separable, and $|T_{H_1}:T_{N_1}|=m$.

Now, if $m$ is a composite number, write $m=m_1m_2 \cdots m_l$,
where $m_j$ is a prime for every $j=1,\dots,l$. Apply
the above construction to each $m_j$, finding a finitely presented conjugacy separable group $T_j$,
which is a subgroup of some right angled Artin group $A_j$, and contains a non-(conjugacy separable)
subgroup $S_j$ of index $m_j$. Define the direct products
$T:= \prod_{j=1}^l T_j$ and $S:= \prod_{j=1}^l S_j \le T$. Then
$T$ is a finitely presented subgroup of the right angled Artin group $A:= \prod_{j=1}^l A_j$
and $|T:S|=m_1 \cdots m_l=m$. It is easy to see that direct products of conjugacy separable groups are
conjugacy separable, hence $T$ is conjugacy separable. On the other hand, $S$ cannot be
conjugacy separable, because $S_1$ is not conjugacy separable (and any retract of a
conjugacy separable group is itself conjugacy separable -- see \cite[Lemma 9.3]{Min-RAAG}).

We thus obtain

\begin{cor} \label{cor:3} There exists finitely presented subgroups of right angled Artin groups
that are conjugacy separable but not hereditarily conjugacy separable.
\end{cor}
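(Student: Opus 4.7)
The plan is to observe that Corollary \ref{cor:3} is essentially a rephrasing of (a special case of) Theorem \ref{thm:fp_ex}, whose proof was just completed in the preceding paragraphs. So very little new work is required: one just needs to extract the relevant properties from the construction.

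First I would fix $m=2$ (any prime $m\ge 2$ would do) and apply the construction that established Theorem \ref{thm:fp_ex}. Specifically, start with a short exact sequence \eqref{eq:short_exact-2} provided by Example \ref{ex:1} or Example \ref{ex:2} with $|F|=2$. Theorem \ref{thm:Rips-h_c_s} then produces a torsion-free hereditarily conjugacy separable hyperbolic group $G$ containing a finite index subgroup $G_1\in\vr$, together with a finitely generated normal subgroup $N\lhd G$ satisfying $G/N\cong P$. Setting $H:=\psi^{-1}(F)$, $N_1:=N\cap G_1$, $H_1:=H\cap G_1$, the primality of $m$ forces $F\le\psi(G_1)=:P_1$, so that $|H_1:N_1|=|F|=m=2$, and $P_1$ is not residually finite because it has finite index in $P$.

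Next I would pass to the associated fibre products $T_{N_1},T_{H_1}\le G_1\times G_1$. The key point for the RAAG statement is that $G_1\in\vr$ means $G_1$ embeds in some right angled Artin group $A$; since the class of RAAGs is closed under direct products, $G_1\times G_1$ embeds in the RAAG $A\times A$, and so does $T_{H_1}$. Lemma \ref{lem:1-2-3} applies (here $G_1$ is finitely presented, $H_1$ is finitely generated, and $G_1/H_1$ has finite index in $Q$, hence inherits type $F_3$), yielding that $T_{H_1}$ is finitely presented. Proposition \ref{prop:fibre_sep->conj_sep} applies since $G_1/H_1$ embeds in the cyclic subgroup separable group $Q$, yielding that $T_{H_1}$ is conjugacy separable. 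By Corollary \ref{cor:non_rf_quot->non_cs}, $T_{N_1}$ fails to be conjugacy separable because $G_1/N_1\cong P_1$ is not residually finite. Finally, Lemma \ref{lem:fibre_f_i} gives $|T_{H_1}:T_{N_1}|=|H_1:N_1|=2<\infty$, so $T_{H_1}$ contains a non-(conjugacy separable) finite index subgroup, witnessing that $T_{H_1}$ is not hereditarily conjugacy separable.

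There is no genuine obstacle here; the only care needed is the verification that the hypotheses of the fibre-product lemmas transfer intact from $G$ to $G_1$ (type $F_3$ and cyclic subgroup separability of $Q$ both pass to the finite-index image $G_1/H_1\le Q$), and that the virtual retract property of $G_1$ is exactly what is required to place $T_{H_1}$ inside a RAAG as opposed to merely inside an $\avr$-group. With these routine observations the corollary is immediate.
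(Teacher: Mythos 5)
Your proposal is correct and follows essentially the same route as the paper: the construction preceding Corollary \ref{cor:3} in Section \ref{sec:f_p_ex} likewise passes to the finite index subgroup $G_1\in\vr$, forms the fibre products $T_{H_1},T_{N_1}\le G_1\times G_1$, and invokes Lemma \ref{lem:1-2-3}, Proposition \ref{prop:fibre_sep->conj_sep}, Corollary \ref{cor:non_rf_quot->non_cs} and Lemma \ref{lem:fibre_f_i} exactly as you do, with the RAAG containment coming from closure of right angled Artin groups under direct products. Your fixing of $m=2$ is a harmless specialization, since the corollary only asserts existence.
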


A group $G$, generated by a set $X$, is said to have \emph{solvable order problem} if there is an algorithm which, given any word $W$ over $X^{\pm 1}$, 
outputs $n \in \N$ as long as the order of the element of $G$ represented by $W$ is $n$ or $0$ if this element has infinite order. Note that if $G$ has solvable word 
problem and is virtually torsion-free (so that there is a bound on the possible finite orders of its elements), then it has solvable order problem.

It is not difficult to show that if a finitely presented group $Q$ has uniformly decidable
MPCS and solvable order problem, then for any finite group $F$, any extension $P$ of $F$ by $Q$ also has uniformly decidable MPCS.
Therefore, Proposition \ref{prop:fibre_memb->conj} (together with Mal'cev's theorem
mentioned right before it and Selberg's Lemma \cite[Cor. 4.8]{Wehr-book}, stating that finitely generated subgroups of ${\rm GL}(m,\Z)$ are virtually torsion-free) 
yields the second additional claim of Theorem \ref{thm:fp_ex}:

\begin{cor} \label{cor:4} Both of the groups $T$ and $S$ above have solvable conjugacy problem.
\end{cor}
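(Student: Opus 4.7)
The plan is to reduce the claim to the individual fibre-product factors $T_j$ and $S_j$ built in the preceding main construction, and then invoke Proposition \ref{prop:fibre_memb->conj}. Since the conjugacy problem is solvable in a finite direct product if and only if it is solvable in each factor (two tuples are conjugate iff they are coordinate-wise conjugate, so one runs the factor algorithms in parallel), it suffices to show that each $T_j=T_{H_j^{(1)}}$ and $S_j=T_{N_j^{(1)}}$ has solvable conjugacy problem, where $G_j^{(1)}$, $N_j^{(1)}$, $H_j^{(1)}$ denote the groups constructed from the prime $m_j$.

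By Proposition \ref{prop:fibre_memb->conj}, applied inside the torsion-free hyperbolic group $G_j^{(1)}$, the conjugacy problems in $T_{H_j^{(1)}}$ and $T_{N_j^{(1)}}$ are solvable provided that the quotients $G_j^{(1)}/H_j^{(1)}$ and $G_j^{(1)}/N_j^{(1)}$ have uniformly decidable MPCS. By construction these quotients are, respectively, finite-index subgroups of $Q$ and of $P$ from the short exact sequence \eqref{eq:short_exact-2}. So the task is to establish uniform decidability of MPCS in both $P$ and $Q$, and then to pass it to finite-index subgroups.

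For $Q$: it is finitely presented (being a finite-index subgroup of ${\rm Sp}(2n,\Z)$ or of ${\rm SL}(3,\Z)$, both of type $F_n$ for all $n$ by Borel--Serre) and it is cyclic subgroup separable since it embeds in some ${\rm GL}(m,\Z)$, and ${\rm GL}(m,\Z)$ is cyclic subgroup separable by \cite[Thm.~5, p.~61]{Segal}. Mal'cev's theorem then gives uniform decidability of MPCS in $Q$. For $P$: it is an extension of the finite group $F$ by $Q$, and by the observation stated immediately before the corollary, uniform decidability of MPCS transfers from $Q$ to $P$. Finally, uniform decidability of MPCS is inherited by finitely generated subgroups (and in particular by finite-index subgroups) because membership of $y$ in $\langle x\rangle$ does not depend on the ambient group once the subgroup elements are rewritten as words in the larger generating set.

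The only mildly delicate step is the claim that MPCS is closed under finite extensions, which the authors state as an easy exercise; one checks it by taking a transversal $\{t_1,\dots,t_{|F|}\}$ of $Q$ in $P$, observing that $\langle x\rangle$ is virtually cyclic, isolating a finite-index subgroup $\langle x^k\rangle$ whose image in $Q$ is cyclic, and then using the MPCS algorithm of $Q$ on the images $\psi(y)$ and $\psi(x^k)$ together with enumerations over the finitely many cosets to decide membership in $\langle x\rangle$. Combining these ingredients yields solvability of the conjugacy problem for every $T_j$ and $S_j$, hence for $T$ and $S$.
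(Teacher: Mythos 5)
Your argument is correct and follows essentially the same route as the paper's: Mal'cev's theorem plus cyclic subgroup separability gives uniformly decidable MPCS in $Q$, the observation about finite extensions transfers it to $P$, Proposition \ref{prop:fibre_memb->conj} then solves the conjugacy problem in each fibre-product factor $T_j$ and $S_j$, and coordinatewise conjugacy handles the direct products $T$ and $S$. You are in fact slightly more careful than the paper in passing MPCS to the finite-index subgroups of $Q$ and $P$ that actually occur as the quotients $G_1/H_1$ and $G_1/N_1$; your sketch of the finite-extension exercise is a little garbled (the relevant finite enumeration is over the elements of the finite normal subgroup $F$, and the one genuine subtlety is deciding the order of the image of $x$ in $Q$, which is harmless here since $Q$ is linear over $\Z$), but the paper itself leaves that step entirely to the reader.
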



\begin{thebibliography}{99}

\bibitem{Mihalik} J.M. Alonso, et al., \textit{Notes on word hyperbolic groups.}
Edited by H. Short. Group theory from a geometrical viewpoint (Trieste, 1990), 3--63,
World Sci. Publ., River Edge, NJ, 1991.

\bibitem{B-B-M-S} G. Baumslag, M.R. Bridson, C.F. Miller III, H. Short,
\textit{Fibre products, non-positive curvature, and decision problems.}
Comment. Math. Helv.  75  (2000),  no. 3, 457--477.

\bibitem{B-M-V} O. Bogopolski, A. Martino, E. Ventura,
\textit{Orbit decidability and the conjugacy problem for some extensions of groups}.
Preprint, 2007. \texttt{arXiv:0712.3104 }

\bibitem{Bor-Serre} A. Borel,  J.-P. Serre,
\textit{Cohomologie d'immeubles et de groupes $S$-arithm\'etiques.}
Topology 15 (1976), no. 3, 211--232.

\bibitem{Bridson-direct} M.R. Bridson, \textit{Direct factors of profinite completions and decidability.} J. Group Theory 12 (2009), no. 1, 151--156.

\bibitem{Bridson} M.R. Bridson,
\textit{On the existence of flat planes in spaces of nonpositive curvature.}
Proc. Amer. Math. Soc. 123 (1995), no. 1, 223--235.

\bibitem{Bridson-conj_semihyp} M.R. Bridson, \textit{On the subgroups of semihyperbolic groups.}
Essays on geometry and related topics, Vol. 1, 2,  85--111, Monogr. Enseign. Math., 38, Enseignement Math., Geneva, 2001.

\bibitem{B-G} M.R. Bridson, F. Grunewald, \textit{Grothendieck's problems concerning profinite completions and representations of groups.}
Annals of Mathematics. Second Series, 160 (2004), no.1, 359--373.

\bibitem{B-H} M.R. Bridson, A.  Haefliger, \textit{Metric spaces of non-positive curvature.}
Grundlehren der Mathematischen Wissenschaften [Fundamental Principles of Mathematical Sciences], 319. Springer-Verlag, Berlin, 1999. xxii+643 pp.

\bibitem{Chag-Zal} S.C. Chagas, P.A. Zalesskii, \textit{Finite index subgroups of conjugacy separable groups}.
Forum Mathematicum 21 (2009), no. 2, 347--353.


\bibitem{Col-Mil} D.J. Collins, C.F. Miller III, \textit{The conjugacy problem and subgroups of finite index.} Proc. London Math. Soc. (3) (1977) vol. 34, no. 3, 535--556.


\bibitem{Cors-Ratk} J.M. Corson, T.J. Ratkovich, \textit{A strong form of residual finiteness for groups.}
J. Group Theory  9  (2006),  no. 4, 497--505.

\bibitem{CB-W} O. Cotton-Barratt, H. Wilton,
\textit{Conjugacy separability of 1-acylindrical graphs of free groups.}
Preprint, 2009. \texttt{arXiv:0906.0101 }

\bibitem{Deligne} P. Deligne,
\textit{Extensions centrales non r\'esiduellement finies de groupes arithm\'etiques.}
C. R. Acad. Sci. Paris S\'er. A-B 287 (1978), no. 4, A203--A208.

\bibitem{Ghys-Harpe} \'E. Ghys and P. de la Harpe, eds.,
\textit{Sur les groupes hyperboliques d'apr\`es Mikhael Gromov.}
Progress in Mathematics, 83. Birkh\"auser Boston,
Inc., Boston, MA, 1990. xii+285 pp.

\bibitem{Gor} A.V. Goryaga, \textit{Example of a finite extension of an FAC-group that is not an FAC-group} (Russian).  Sibirsk. Mat. Zh.  27  (1986),  no. 3, 203--205.

\bibitem{Gor-Kirk} A.V. Gorjaga, A.S. Kirkinski\u\i,
\textit{The decidability of the conjugacy problem cannot be transferred to finite extensions of groups} (Russian). Algebra i Logika 14 (1975), no. 4, 393--406.

\bibitem{Gromov} M. Gromov, \textit{Hyperbolic groups.}
Essays in group theory,  75--263, Math. Sci. Res. Inst. Publ., 8, Springer, New York, 1987.

\bibitem{Grossman} E.K. Grossman, \textit{On the residual finiteness of certain mapping class groups.}
J. London Math. Soc. (2)  9  (1974/75), 160--164.

\bibitem{H-W_1} F. Haglund, D.T. Wise, \textit{Special cube complexes.}
Geom. Funct. Anal.  17  (2008),  no. 5, 1551--1620.

\bibitem{Hall} P. Hall, \textit{Finiteness conditions for soluble groups.}
Proc. London Math. Soc. (3)  4  (1954), 419--436.


\bibitem{Hewitt} P.R. Hewitt, \textit{Extensions of residually finite groups.}
J. Algebra 163 (1994), no. 3, 757--772.

\bibitem{Mal'cev-matrix} A.I. Malcev, \textit{On faithful representations of infinite groups by matrices}
(Russian). Mat. Sb. 8 (1940) 405--422.
English transl. in Amer. Math. Soc. Transl. Ser. 2 45 (1965) 1--18.

\bibitem{Malcev} A.I. Mal'cev, \textit{On homormorphisms onto finite groups} (Russian). Uchen. Zap. Ivanovskogo Gos. Ped.
Inst. 18 (1958), 49--60.


\bibitem{Miller-survey}  C.F. Miller III,
\textit{Decision problems for groups---survey and reflections.}
Algorithms and classification in combinatorial group theory (Berkeley, CA, 1989), 1--59,
Math. Sci. Res. Inst. Publ., 23, Springer, New York, 1992.

\bibitem{Min-CC} A. Minasyan, \textit{Groups with finitely many conjugacy classes and their automorphisms.}
Comm. Math. Helv. 84 (2009), No. 2, pp. 259--296.

\bibitem{Min-RAAG} A. Minasyan, \textit{Hereditary conjugacy separability of right angled Artin groups
and its applications.} Preprint, 2009. \texttt{arXiv:0905.1282}

\bibitem{Min-Osin} A. Minasyan, D. Osin,
\textit{Normal automorphisms of relatively hyperbolic groups}.
Trans. of the AMS, to appear.  \texttt{arXiv:0809.2408}

\bibitem{Olsh-G-sbgps} A.Yu. Olshanskii,\textit{On residualing homomorphisms and $G$-subgroups of hyperbolic
groups.} Internat. J. Algebra Comput. 3  (1993),  no. 4, 365--409.

\bibitem{Osin-rel_hyp} D.V. Osin, \textit{Relatively hyperbolic groups: intrinsic geometry,
algebraic properties, and algorithmic problems.}
{Mem. Amer. Math. Soc.}  {179}  (2006),  no. 843.

\bibitem{Plat-Rap} V. Platonov, A. Rapinchuk, \textit{Algebraic groups and number theory.}
Translated from the 1991 Russian original by Rachel Rowen. Pure and Applied Mathematics, 139. Academic Press, Inc., Boston, MA, 1994. xii+614 pp.

\bibitem{Rips} E. Rips, \textit{Subgroups of small cancellation groups.}
Bull. London Math. Soc.  14  (1982), no. 1, 45--47.

\bibitem{Segal} D. Segal, \textit{Polycyclic groups.}
Cambridge Tracts in Mathematics, 82. Cambridge University Press, Cambridge, 1983. xiv+289 pp.

\bibitem{Wehr-book} B.A.F. Wehrfritz,  \emph{Infinite linear groups. An account of the group-theoretic properties of infinite groups of matrices.} 
Ergebnisse der Matematik und ihrer Grenzgebiete, Band 76. Springer-Verlag, New York-Heidelberg, 1973. xiv+229 pp.

\bibitem{Wise-rips}
D.T. Wise, \textit{A residually finite version of Rips's construction.}
Bull. London Math. Soc. 35 (2003), no. 1, 23--29.

\bibitem{Wise-incoherent} D.T. Wise,
\textit{Incoherent negatively curved groups.}
Proc. Amer. Math. Soc. 126 (1998), no. 4, 957--964.

\bibitem{Wise-polyg} D.T. Wise,
\textit{The residual finiteness of negatively curved polygons of finite groups.}
Invent. Math. 149 (2002), no. 3, 579--617.

\end{thebibliography}
\end{document}